\newcommand{\rmnum}[1]{\romannumeral #1}
\newcommand{\Rmnum}[1]{\expandafter\@slowromancap\romannumeral #1@}
\def\Xint#1{\mathchoice
{\XXint\displaystyle\textstyle{#1}}%
{\XXint\textstyle\scriptstyle{#1}}%
{\XXint\scriptstyle\scriptscriptstyle{#1}}%
{\XXint\scriptscriptstyle\scriptscriptstyle{#1}}%
\!\int}
\def\XXint#1#2#3{{\setbox0=\hbox{$#1{#2#3}{\int}$}
\vcenter{\hbox{$#2#3$}}\kern-.5\wd0}}
\def\dashint{\Xint-}
\def\Xint#1{\mathchoice
{\XXint\displaystyle\textstyle{#1}}%
{\XXint\textstyle\scriptstyle{#1}}%
{\XXint\scriptstyle\scriptscriptstyle{#1}}%
{\XXint\scriptscriptstyle\scriptscriptstyle{#1}}%
\!\int}
\def\XXint#1#2#3{{\setbox0=\hbox{$#1{#2#3}{\int}$}
\vcenter{\hbox{$#2#3$}}\kern-.5\wd0}}
\def\dashint{\Xint-}
\theoremstyle{definition}
\newtheorem{theorem}{Theorem}[section]
\newtheorem{corollary}[theorem]{Corollary}
\newtheorem{lemma}[theorem]{Lemma}
\newtheorem{remark}{Remark}[section]
\begin{document}
\title{Some conditional probabilities in the TASEP with second class particles}
\author{\textbf{Eunghyun Lee\footnote{eunghyun.lee@nu.edu.kz}}\\ {\text{Department of Mathematics, Nazarbayev University}}
                                         \date{}   \\ {\text{Kazakhstan}}   }

\date{}
\maketitle
\begin{abstract}
\noindent In this paper we consider the  TASEP with second class particles which consists of $k$ first class particles and $N-k$ second class particles. We assume that all first class particles  are initially  located to the left of the leftmost second class particle. Under this assumption, we find the probability that the first class particles  are at $x,x+1,\cdots, x+k-1$  and these positions  are still to the left of the leftmost second class particle at time $t$. If we additionally assume that the initial positions of the particles are $1,\cdots, N$, that is, step initial condition, then the formula of the probability does not depend on $k$ and is very similar to a formula for the TASEP (without second class particles) with step initial condition.
\end{abstract}
\section{Introduction and the statements of results}
The one-dimensional totally asymmetric simple exclusion process (TASEP) is one of the simplest particle models that is \textit{exactly solvable} by the standard technique, the Bethe Ansatz. The Bethe Ansatz has been widely used to study the TASEP, its generalizations, and its variants \cite{Corwin-2014,Korhonen-Lee-2014,Lee-2012,Povolotsky-2013,Nagao-Sasamoto-2004,Schutz-1997,Tracy-Widom-2008,Wang-Waugh-2016}. In the TASEP with second class particles there are two species of particles which are labeled  1 and 2, respectively, on the integer lattice $\mathbb{Z}$. Basically, each particle follows the rules of the TASEP. That is, each particle waits exponential time and then jumps to the next right site if the site is empty. If the target site is occupied by a particle of the same species, the jump is prohibited. However, if a particle of species 2 tries to jump to the next right site and the  site is occupied by a particle of species 1, then the particle of species 2 can jump to the next right site by interchanging the positions of the two particles. But, if a particle of species 1 tries to jump to the next right site occupied by a particle of species 2, the jump is prohibited. That is, particles of species 2 have priority  over particles of species 1. For this reason,  particles of species 2 are called first class particles and particles of species 1 are called second class particles.

The TASEP with second class particles has been studied in various directions, but to the best of the author's knowledge, there are only a few works  on the TASEP with second class particles using the Bethe Ansatz technique to find some explicit formulas and related asymptotics \cite{Chatterjee-Schutz-2010,Lee-2017}.  Chatterjee and Sch\"{u}tz studied the transition probabilities of a finite system of the TASEP with second class particles. In particular, they showed that if there is no change in the order of particles, the transition probabilities can be  written in determinantal forms \cite{Chatterjee-Schutz-2010}. Inspired by this fact, the author considered a system with 1 first class particle and $N-1$ second class particles where the leftmost particle at $t=0$ is the first class particle, and  found the probability that the leftmost particle is at $x$ at time $t$ under the assumption that there is no change in the order of particles up to time $t$ \cite{Lee-2017}. Moreover, it was shown that this probability can also be written in a determinantal form  if the initial positions of the particles are given by  step initial condition \cite{Lee-2017}. This determinantal formula is very similar to a formula of the probability for the leftmost particle's position in the TASEP with step initial condition. It is well known that there appears the Tracy-Widom distribution in an asymptotical behaviour of the probability distribution of the leftmost particle's position in a large system of the TASEP with step initial condition \cite{Johansson-2000,Nagao-Sasamoto-2004,Tracy-Widom-1994}. Hence, it is expected that a similar asymptotic behaviour may appear in the TASEP with second class particles.
\\ \\
In this paper, we extend the results in \cite{Lee-2017} to the case where there are $k$ first class particles in the system. Let us introduce some notations. In most cases, we will follow the notations in \cite{Lee-2017}. We denote a state of the process by pairs  $(X,\pi)$ where $X=(x_1,\cdots,x_N) \in \mathbb{W}^N$ with
\begin{equation*}
\mathbb{W}^N = \{(x_1,\cdots, x_N): (x_1,\cdots, x_N) \in \mathbb{Z}^N ~\textrm{and}~ x_1 < \cdots < x_N\}
\end{equation*}
and $\pi=(\pi_1\cdots\pi_N)$ is a finite sequence of length $N$ whose elements are 1 or 2. A state
\begin{equation*}
\big((x_1,\cdots, x_N),(\pi_1\cdots\pi_N)\big)
\end{equation*}
implies that the position of the $i^{\textrm{th}}$ particle from the left is $x_i$ and the $i^{\textrm{th}}$ particle is labeled $\pi_i$. The parentheses in the expressions $(x_1,\cdots,x_N)$ and $(\pi_1\cdots\pi_N)$ are sometimes omitted for convenience, and when we need to specify a state at time $t$, we write
\begin{equation*}
\big((x_1,\cdots, x_N),(\pi_1\cdots\pi_N);t\big).
\end{equation*}
For a fixed $1 \leq k \leq N$, let  $\nu^{(k)} = \big(\nu^{(k)}_j\big)$ be a finite sequence of numbers 1 or 2 with length $N$ such that $\nu^{(k)}_1 = \nu^{(k)}_2 = \cdots = \nu^{(k)}_k = 2$ and  $\nu^{(k)}_{k+1}= \cdots =\nu^{(k)}_{N} = 1$. Let $\nu^{(0)}$  be a sequence $(1,\cdots,1)$. For a fixed $1 \leq k \leq N$, let
\begin{equation*}
E_{t,k,x} = \{(X,\nu^{(k)};t): x_1=x, x_2=x+1,\cdots,x_k=x+(k-1)\}
\end{equation*}
and
\begin{equation*}
E_{t,0,x} = \{(X,\nu^{(0)};t): x_1\geq x\}.
\end{equation*}
 Let $\mathbb{P}_{(Y,\nu^{(k)})}$ be the probability law of the process starting from $(Y,\nu^{(k)})$, and let  $\dashint$ imply $\frac{1}{2\pi i}\int$. Throughout this paper, we will follow a convention for products
\begin{equation*}
\prod_{i=m}^n A_i := \begin{cases}A_m A_{m+1}\cdots A_n&~~\textrm{if}~n\geq m\\
1&~~\textrm{if}~n< m,
\end{cases}
\end{equation*} and we define $\varepsilon(\xi_i) = 1/\xi_i - 1$.
\begin{theorem}\label{205-am-618}
Let $C$ be a counterclockwise circle centered at the origin with radius less than 1. Then, for each $k=0,\cdots, N$,
\begin{equation}\label{225-am-618}
\mathbb{P}_{(Y,\nu^{(k)})}(E_{t,k,x}) = \dashint_C\cdots\dashint_C\prod_{i=1}^k(1-\xi_i)\prod_{1\leq i<j\leq N}\frac{\xi_j-\xi_i}{1-\xi_i}\prod_{i=1}^N\frac{1}{1-\xi_i}\prod_{i=1}^N\bigg(\xi_i^{x-y_i-1}e^{\varepsilon(\xi_i)t}\bigg)d\xi_1\cdots d\xi_N.
\end{equation}
\end{theorem}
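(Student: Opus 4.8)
The plan is to obtain \eqref{225-am-618} by summing the fixed-labelling transition probability over all admissible positions of the $N-k$ second class particles. First I would record the structural fact that makes this legitimate: because a first class particle can only overtake a second class particle and never the reverse, any change in the relative order of a first/second pair is permanent. Hence carrying the labelling $\nu^{(k)}$ at time $t$ with the first $k$ particles occupying the block $x,x+1,\dots,x+k-1$ is equivalent to the order of all particles being preserved throughout $[0,t]$; in particular $E_{t,k,x}$ is the disjoint union, over all $x_{k+1}<\cdots<x_N$ with $x_{k+1}\ge x+k$, of the single-configuration events $\{(X,\nu^{(k)};t)\}$, and the no-change-of-order transition probability of Chatterjee and Sch\"utz (used in \cite{Lee-2017,Chatterjee-Schutz-2010}) applies verbatim to each summand. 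Thus
\[
\mathbb{P}_{(Y,\nu^{(k)})}(E_{t,k,x})=\sum_{x+k\le x_{k+1}<\cdots<x_N}\mathbb{P}_{(Y,\nu^{(k)})}\big((x,\dots,x+k-1,x_{k+1},\dots,x_N),\nu^{(k)};t\big),
\]
and I would insert for the summand its Bethe-ansatz contour-integral representation, an $S_N$-sum of integrals of the form $\prod_i\xi_{\sigma(i)}^{x_i}\prod_i\xi_i^{-y_i-1}e^{\varepsilon(\xi_i)t}$ weighted by the model's two-body amplitudes.

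Next I would interchange the summation with the $N$-fold contour integration. This is legitimate because $C$ has radius strictly less than $1$, so every geometric series below converges absolutely and uniformly on $C^N$. With the first $k$ coordinates frozen at $x_i=x+i-1$, only the tail $\prod_{i=k+1}^N\xi_{\sigma(i)}^{x_i}$ depends on the summation variables, and the nested ordered geometric sum evaluates in closed form,
\[
\sum_{x+k\le x_{k+1}<\cdots<x_N}\ \prod_{i=k+1}^N\xi_{\sigma(i)}^{x_i}=\big(\xi_{\sigma(k+1)}\cdots\xi_{\sigma(N)}\big)^{x+k-1}\prod_{i=k+1}^N\frac{\xi_{\sigma(i)}\cdots\xi_{\sigma(N)}}{1-\xi_{\sigma(i)}\cdots\xi_{\sigma(N)}},
\]
so that after the sum the integrand becomes a rational function of the $\xi$'s carrying the $\sigma$-dependent nested denominators $1-\xi_{\sigma(i)}\cdots\xi_{\sigma(N)}$.

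The crux, and the step I expect to be the main obstacle, is to collapse the resulting sum over $S_N$ into the single integrand of \eqref{225-am-618} with no surviving permutation sum. The difficulty is exactly those nested denominators: the target carries only the simple factors $1/(1-\xi_i)$, so every $1-\xi_{\sigma(i)}\cdots\xi_{\sigma(N)}$ must cancel across the permutation sum. I would carry this out with a Tracy--Widom-type symmetrization identity, reorganizing the alternating sum so that the nested factors telescope while the two-body amplitudes assemble into the Vandermonde $\prod_{1\le i<j\le N}(\xi_j-\xi_i)$ over $\prod_{i<j}(1-\xi_i)$. Since $|\xi_{\sigma(i)}\cdots\xi_{\sigma(N)}|<1$ on $C^N$, each such rational factor is analytic there, so the reduction is a purely algebraic identity rather than a residue computation, and this is where the bulk of the technical work lies.

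Finally I would read off the remaining factors. The frozen block contributes $\prod_{i=1}^k\xi_{\sigma(i)}^{x+i-1}$, and upon symmetrization the $+(i-1)$ shifts of a solid block of length $k$ produce precisely the extra factor $\prod_{i=1}^k(1-\xi_i)$; combining the surviving symmetric factors with the tail powers $(\xi_{\sigma(k+1)}\cdots\xi_{\sigma(N)})^{x+k-1}$ and with $\prod_i\xi_i^{-y_i-1}e^{\varepsilon(\xi_i)t}$ yields the uniform exponent $x-y_i-1$ and the integrand of \eqref{225-am-618}. As consistency checks I would confirm that $k=1$ recovers the formula of \cite{Lee-2017} and that the step initial condition renders the result independent of $k$, as announced in the abstract.
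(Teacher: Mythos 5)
Your overall strategy coincides with the paper's: decompose $E_{t,k,x}$ into single-configuration events with labelling $\nu^{(k)}$, insert the Bethe-ansatz transition probability, sum the geometric series inside the contour integrals (your closed form for the ordered sum is correct and matches the paper's computation leading to the nested denominators $1-\xi_{\sigma(i)}\cdots\xi_{\sigma(N)}$), and then collapse the resulting $S_N$-sum into the single integrand of (\ref{225-am-618}). So the route is the same; the problem is that the two ingredients that constitute essentially all of the paper's work are left unproven.

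First, you treat the amplitude attached to $\sigma$ as a known product of ``two-body amplitudes,'' but in the multi-species TASEP the relevant object is the $(h_k,h_k)$ diagonal entry of the $2^N\times 2^N$ matrix product $\mathbf{A}_{\sigma}=\mathbf{T}_{a_n}\cdots\mathbf{T}_{a_1}$, and it is a genuinely nontrivial fact (Lemma \ref{146-am-520}, proved by induction on $N$ through the tensor-product structure of the $\mathbf{T}_l$) that this entry factorizes as $\textrm{sgn}(\sigma)\prod_{j=1}^{k}\bigl(\frac{1-\xi_{j}}{1-\xi_{\sigma(j)}}\bigr)^{j-1}\prod_{j=k+1}^{N}\bigl(\frac{1-\xi_{j}}{1-\xi_{\sigma(j)}}\bigr)^{j-2}$; without this you cannot even write down the permutation sum you intend to collapse. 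Second, the step you yourself flag as the crux is exactly Lemma \ref{1141-pm-630}, and for $k\geq 2$ no off-the-shelf Tracy--Widom symmetrization identity does the job: the paper must prove a new identity by partitioning each permutation according to the image $\Lambda$ of $\{N-k+1,\dots,N\}$, applying the known $k=0$ identity to the $\Lambda^c$-block, a separate Vandermonde identity (Lemma \ref{1144-pm-630}) to the $\Lambda$-block, and then resumming over $\Lambda$ via the generalized Laplace expansion (Corollary \ref{1148-pm-630}). Your appeal to ``telescoping'' does not supply this argument, and your heuristic that the frozen block's shifts ``produce precisely the extra factor $\prod_{i=1}^k(1-\xi_i)$'' is not how that factor arises --- it emerges only from the full identity. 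As it stands the proposal is a correct outline of the paper's proof with its two load-bearing lemmas missing.
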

In (\ref{225-am-618}), if $k=0$, the result is for the leftmost particle's position in the TASEP \cite{Johansson-2000,Nagao-Sasamoto-2004,Tracy-Widom-2008}. If $k=1$, then the result is equal to Theorem 1.1 in \cite{Lee-2017}. Now, we provide a formula for  step initial condition.
\begin{theorem}\label{1143-am-629}
For $1\leq k \leq N$, if $Y = (1,2,\cdots, N)$, then
\begin{equation}\label{534-pm-517}
\begin{aligned}
 \mathbb{P}_{(Y,\nu^{(k)})}(E_{t,k,x})~ =~&   \frac{(-1)^{N(N-1)/2}}{N!} \dashint_C\cdots\dashint_C\prod_{1\leq i<j\leq N}(\xi_j-\xi_i)^2\prod_{i=1}^N\frac{1}{(\xi_i-1)^{N-1}}\\
& \hspace{3.5cm}\times\prod_{i=1}^N \Big(\xi_i^{x-N-1}e^{\varepsilon(\xi_i) t}\Big)d\xi_1\cdots d\xi_N.
\end{aligned}
\end{equation}
\end{theorem}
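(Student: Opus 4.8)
The plan is to obtain \eqref{534-pm-517} directly from Theorem~\ref{205-am-618} by symmetrizing the integrand, the reward being that the dependence on $k$ evaporates. Putting $y_i=i$ in \eqref{225-am-618} and using
\[
\prod_{1\le i<j\le N}\frac{\xi_j-\xi_i}{1-\xi_i}=\prod_{1\le i<j\le N}(\xi_j-\xi_i)\cdot\prod_{i=1}^N(1-\xi_i)^{-(N-i)},
\]
I would first regroup the integrand as $\prod_{1\le i<j\le N}(\xi_j-\xi_i)\cdot\prod_{i=1}^N h^{(k)}_i(\xi_i)$ with
\[
h^{(k)}_i(\xi)=(1-\xi)^{\,[i\le k]-(N-i)-1}\,\xi^{\,x-i-1}\,e^{\varepsilon(\xi)t},
\]
so that every trace of $k$ and of the row index is confined to the single-variable functions $h^{(k)}_i$, which are carried by the antisymmetric factor $\prod_{i<j}(\xi_j-\xi_i)$.

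Since all the contours coincide with the circle $C$, relabelling the integration variables by any $\sigma\in S_N$ leaves the integral unchanged, so I may replace the integrand by its average $\frac1{N!}\sum_{\sigma}(\,\cdot\,\circ\sigma)$. The factor $\prod_{i<j}(\xi_j-\xi_i)$ contributes $\operatorname{sgn}(\sigma)$ under relabelling, and the permutation sum therefore collapses into a single determinant,
\[
\frac1{N!}\prod_{1\le i<j\le N}(\xi_j-\xi_i)\,\det\big[h^{(k)}_j(\xi_i)\big]_{i,j=1}^N.
\]
Extracting the row factors $e^{\varepsilon(\xi_i)t}$, $\xi_i^{x-1}$ and $(1-\xi_i)^{-N}$ from the determinant and writing $w_i:=1/\xi_i-1\ (=\varepsilon(\xi_i))$, I would reduce this to $\det\big[M^{(k)}_{ij}\big]$ with $M^{(k)}_{ij}=w_i^{\,j}(1-\xi_i)^{-[j>k]}$: the $k$-th and all earlier columns are the bare powers $w_i^{\,j}$, while each later column drags one extra factor $(1-\xi_i)^{-1}$.

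The key step --- and the only genuine obstacle --- is to prove that $\det\big[M^{(k)}_{ij}\big]$ is independent of $k$. For this I would compare $M^{(k)}$ and $M^{(k+1)}$, which agree except in column $k+1$. Since $1-\xi_i=w_i/(1+w_i)$, the $M^{(k)}$ entry there is $w_i^{\,k+1}(1-\xi_i)^{-1}=w_i^{\,k+1}(1+w_i)/w_i=w_i^{\,k}+w_i^{\,k+1}$; its second summand is exactly the corresponding $M^{(k+1)}$ entry, while the first summand $w_i^{\,k}$ reproduces column $k$, which both matrices share. Thus $M^{(k)}$ arises from $M^{(k+1)}$ by adding column $k$ to column $k+1$, leaving the determinant unchanged, and iterating over $k=1,\dots,N-1$ gives $\det M^{(k)}=\det M^{(N)}$ for every $k$.

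Finally I would evaluate this common value at $k=N$, where $M^{(N)}_{ij}=w_i^{\,j}$ is a shifted Vandermonde matrix; pulling $w_i$ out of each row and using $w_j-w_i=(\xi_i-\xi_j)/(\xi_i\xi_j)$ gives $\det M^{(N)}=(-1)^{N(N-1)/2}\prod_{i<j}(\xi_j-\xi_i)\prod_{i=1}^N(1-\xi_i)\xi_i^{-N}$. Reinserting the extracted row factors, the two copies of $\prod_{i<j}(\xi_j-\xi_i)$ merge into $\prod_{i<j}(\xi_j-\xi_i)^2$ and the $(1-\xi_i)$ exponents collapse to $(1-\xi_i)^{-(N-1)}=(\xi_i-1)^{-(N-1)}$, producing precisely the symmetric integrand of \eqref{534-pm-517} with prefactor $(-1)^{N(N-1)/2}/N!$. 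The only delicate bookkeeping is keeping track of the exponents of $\xi_i$ and $1-\xi_i$ through the row extractions; the new ingredient that drives the whole argument is the one-column identity above, which is what renders the answer independent of the number $k$ of first class particles.
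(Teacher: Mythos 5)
Your proof is correct, and its skeleton coincides with the paper's: both specialize Theorem \ref{205-am-618} to $Y=(1,\dots,N)$, exploit the fact that all contours coincide to replace the integrand by its average over relabelings of the variables, and thereby reduce \eqref{534-pm-517} to an algebraic identity asserting that a certain antisymmetrized sum (equivalently, a determinant) equals $(-1)^{N(N-1)/2}\prod_{i<j}(\xi_j-\xi_i)$ independently of $k$. The difference lies in how that identity is established. The paper simply invokes Lemma 3.4 of \cite{Lee-2017}, whereas you prove it from scratch: you recognize the sum as $\det\big[M^{(k)}_{ij}\big]$ with $M^{(k)}_{ij}=w_i^{\,j}(1-\xi_i)^{-[j>k]}$ and $w_i=1/\xi_i-1$, observe via the one-column identity $w_i^{\,k+1}(1-\xi_i)^{-1}=w_i^{\,k}+w_i^{\,k+1}$ that $M^{(k)}$ and $M^{(k+1)}$ differ by an elementary column operation (so the determinant is independent of $k$), and then evaluate the $k=N$ case as a shifted Vandermonde determinant in the $w_i$, using $w_j-w_i=(\xi_i-\xi_j)/(\xi_i\xi_j)$. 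I checked the exponent bookkeeping through the row extractions ($e^{\varepsilon(\xi_i)t}$, $\xi_i^{x-1}$, $(1-\xi_i)^{-N}$) and the final reassembly, and it is consistent with \eqref{534-pm-517}, including the sign $(-1)^{N(N-1)/2}$ and the conversion $(1-\xi_i)^{-(N-1)}\to(\xi_i-1)^{-(N-1)}$. What your route buys is a self-contained argument that also makes the $k$-independence of the answer structurally transparent, rather than something read off from the final formula after citing an external lemma.
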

The formula (\ref{534-pm-517}) is comparable to the formula for the case of $k=0$, that is, the probability distribution of the leftmost particle's position in the TASEP \cite{Johansson-2000,Nagao-Sasamoto-2004,Tracy-Widom-2008},
\begin{equation*}\label{534-pm-619}
\begin{aligned}
 \mathbb{P}_{(Y,\nu^{(0)})}(E_{t,0,x})~ =~&   \frac{(-1)^{N(N-1)/2}}{N!} \dashint_C\cdots\dashint_C\prod_{1\leq i<j\leq N}(\xi_j-\xi_i)^2\prod_{i=1}^N\frac{1}{(\xi_i-1)^{N}}\\
 & \hspace{3.5cm}\prod_{i=1}^N \Big(\xi_i^{x-N-1}e^{\varepsilon(\xi_i) t}\Big)d\xi_1\cdots d\xi_N.
\end{aligned}
\end{equation*}
\begin{remark}
Tracy and Widom posted a paper on the \textit{block probabilities} (named first in their paper \cite{Tracy-Widom-2017}) of the asymmetric simple exclusion process (ASEP) around a week after the first version of this paper was posted. According to Tracy and Widom's definition, a \textit{block} of length $k$ means a contiguous block of $k$ particles. The \textit{block probabilities} in the ASEP mean   $\mathbb{P}_{(Y,(22\cdots 2))}(E_{t,k,x}^{m})$ where
\begin{equation*}
E_{t,k,x}^m = \{(X,(22\cdots 2);t): x_m=x, x_{m+1}=x+1,\cdots,x_{m+k-1}=x+k-1\}.
\end{equation*}
Hence, the formula (\ref{534-pm-517}) with $k=N$ is the \textit{block probability} for the block of length $N$ in the TASEP (with a single species) with step initial condition. Indeed, it is confirmed that (\ref{534-pm-517}) with $k=N$ is equal to the formula  in Theorem 1  with $p=1$, $m=1$, and $L=N$ in \cite{Tracy-Widom-2017}.
\end{remark}
\begin{remark}
It is noticeable that the formula (\ref{534-pm-517}) has no dependency on $k$. That is, the probability that all $k$ first class particles form a \textit{block} which begins at $x$ and all second class particles are to the right of the block at time $t$ does not depend on the lengh of the \textit{block} but does depend on the position of the beginning of the \textit{block}. On the other hand, Tracy and Widom's formula of the ASEP without second class particles depends on the length of the \textit{block}  {\cite[Theorem 3]{Tracy-Widom-2017}}.
\end{remark}
\textbf{Organization of the paper}\\\\
The rest of this paper is organized as follows. In Section 2, we provide some preliminary results for the proofs of our main results. In Section 3, we prove Theorem \ref{205-am-618} and Theorem \ref{1143-am-629}.
\section{Preliminary results}
 The probability that the system will be at state $(X,\pi)$ at time $t$ given that the system started from state $(Y,\nu)$ at time $t=0$, is denoted by $P_{(Y,\nu)}(X,\pi;t)$. The transition probabilities, $P_{(Y,\nu)}(X,\pi;t)$ are the elements of infinite matrices $\mathbf{P}(t)$ which form a semigroup $\{\mathbf{P}(t): t \geq 0\}$. A submatrix of $\mathbf{P}(t)$, denoted by $\mathbf{P}_Y(X;t)$, consists of $P_{(Y,\nu)}(X,\pi;t)$ for fixed $X$ and $Y$. Hence, $\mathbf{P}_Y(X;t)$ is a $2^N \times 2^N$ matrix. The elements $P_{(Y,\nu)}(X,\pi;t)$ are listed in the reverse lexicographic order of $\nu$ and $\pi$ from left to right and from top to bottom, respectively, in $\mathbf{P}_Y(X;t)$. (See (6) in \cite{Lee-2017} for an example.) The formula of $\mathbf{P}_Y(X;t)$ is
\begin{equation}\label{1223-am-519}
\mathbf{P}_Y(X;t) = \dashint_C\cdots \dashint_C\sum_{\sigma\in {S}_N}\mathbf{A}_{\sigma}\prod_{i=1}^N\Big(\xi_{\sigma(i)}^{x_i-y_{\sigma(i)}-1}e^{\varepsilon(\xi_i) t}\Big) d\xi_1\cdots d\xi_N
\end{equation}
where $C$ is a counterclockwise circle centered at the origin with its radius less than 1 and $\mathbf{A}_{\sigma}$ is a certain $2^N\times 2^N$ matrix, which will be introduced in more detail in the subsection \ref{903}  \cite{Chatterjee-Schutz-2010,Lee-2017,Tracy-Widom-2013}.
\begin{remark}
 The explicit formulas of the elements of  $\mathbf{A}_{\sigma}$ in (\ref{1223-am-519}) were not given in \cite{Chatterjee-Schutz-2010,Tracy-Widom-2013} but the explicit formula of the $(2^{N-1}+1,2^{N-1}+1)^{\textrm{th}}$ element of $\mathbf{A}_{\sigma}$ which is for $P_{(Y,\nu^{(1)})}(X,\nu^{(1)};t)$, was given in \cite{Lee-2017}. In general, the matrices $\mathbf{A}_{\sigma}$ are the products of some matrices. To the best of the author's knowledge, there are no known ways of identifying each element of $\mathbf{A}_{\sigma}$ without directly computing the matrix products except some special cases.
  \end{remark}
 \subsection{Explicit formulas of $P_{(Y,\nu^{(k)})}(X,\nu^{(k)};t)$}\label{903}
 First, we state the notations used in \cite{Lee-2017}. Let $T_l$ be the simple transposition in the symmetric group  $S_N$ such that for any $\sigma \in S_N$, $\big(T_l\sigma\big)(l) = \sigma(l+1)$,  $\big(T_l\sigma\big)(l+1) = \sigma(l)$, and $\big(T_l\sigma\big)(j) = \sigma(j)$ if $j\neq l,l+1$, for each $l=1,2,\cdots, N-1$.
If $T_l$ interchanges $\alpha$ and $\beta$, that is, $T_l(\cdots \alpha\beta\cdots) = (\cdots\beta\alpha\cdots)$,
we define a $2^N\times 2^N$ matrix,
\begin{equation} \label{1243-am-519}
\mathbf{T}_{l}=\mathbf{T}_{l}({\alpha},{\beta})=
 \mathbf{I}_2^{\otimes(l-1)} \otimes \mathbf{S}_{\beta\alpha} \otimes  \mathbf{I}_2^{\otimes(N-l-1)}, ~ (l=1,\cdots,N-1)
\end{equation}
where
\begin{equation}
\mathbf{S}_{\beta\alpha} =  \left[
                                                                                                     \begin{array}{cccc}
                                                                                                       -\frac{1-\xi_{\beta}}{1-\xi_{\alpha}} & 0 & 0 & 0 \\
                                                                                                       0 & -\frac{1-\xi_{\beta}}{1-\xi_{\alpha}} & \frac{\xi_{\beta}-\xi_{\alpha}}{1-\xi_{\alpha}} & 0 \\
                                                                                                       0 & 0 & -1 & 0 \\
                                                                                                       0 & 0 & 0 & -\frac{1-\xi_{\beta}}{1-\xi_{\alpha}} \\
                                                                                                     \end{array}
                                                                                                   \right]
                                                                                                 \label{546pm511}
\end{equation}
and $\otimes$ implies the tensor product of matrices.
Since $T_1,\cdots,T_{N-1}$ generate $S_N$, for any given $\sigma \in S_N$, there are  $T_{a_1}, \cdots, T_{a_n}$ such that
\begin{equation}
\sigma = T_{a_n}\cdots T_{a_1} \label{411-pm-614}
\end{equation}
where $a_1,\cdots, a_n$ are some integers in $ \{1,\cdots, N-1\}$. It is known that $\mathbf{T}_i$'s satisfy the consistency conditions  \cite{Chatterjee-Schutz-2010}. For any $\sigma \in S_N$ written as in (\ref{411-pm-614}), $\mathbf{A}_{\sigma}$ in (\ref{1223-am-519}) is given by
\begin{equation}
\mathbf{A}_{\sigma} = \mathbf{T}_{a_n}\cdots \mathbf{T}_{a_1}  \label{417-pn-614}
\end{equation}
\cite{Chatterjee-Schutz-2010,Lee-2017,Tracy-Widom-2013}. For simplicity in notations, let
\begin{equation*}
h_k = \begin{cases}
 2^{N-1} + 2^{N-2} + \cdots + 2^{N-k}+1&~~\textrm{for $k=1,\cdots,N$},\\
1&~~\textrm{for $k=0$.}
\end{cases}
\end{equation*}
Then, we observe that $P_{(Y,\nu^{(k)})}(X,\nu^{(k)};t)$ is the $(h_k,h_k)^{
\textrm{th}}$ element of $\mathbf{P}_Y(X;t)$. We write $[\mathbf{A}]_{i,j}$ for the $(i,j)^{\textrm{th}}$ element of a matrix $\mathbf{A}$ and  $\mathbf{I}_n$ for the $n \times n$ identity matrix, and we wish to find an explicit formula of  $[\mathbf{A}_{\sigma}]_{h_k,h_k}$ in (\ref{1223-am-519}).
\begin{lemma}\label{340am629}
\begin{itemize}
\item [(\rmnum{1})] If $\mathbf{A}$ is an $n \times n$ matrix, then
 \begin{equation}\label{833-pm-617}
\big[\mathbf{A}\big]_{i,i} = \big[\mathbf{A}\otimes \mathbf{I}_2\big]_{2i,2i} = \big[\mathbf{A}\otimes \mathbf{I}_2\big]_{2i-1,2i-1}
\end{equation}
and
 \begin{equation}\label{834-pm-617}
\big[\mathbf{A}\big]_{i,i} = \big[ \mathbf{I}_2 \otimes \mathbf{A}\big]_{i,i} = \big[ \mathbf{I}_2 \otimes \mathbf{A}\big]_{i+n,i+n}.
\end{equation}
\item [(\rmnum{2})] For  $l=1,2,\cdots, N-1$ and $k=0,\cdots, N$, we have
\begin{equation}\label{719-pm-617}
[\mathbf{T}_l]_{h_k,h_k} = [\mathbf{T}_l(\alpha,\beta)]_{h_k,h_k} = \begin{cases}
-1 & ~~\textrm{if}~~l=k,\\
-\frac{1-\xi_{\beta}}{1-\xi_{\alpha}}&  ~~\textrm{if}~~l\neq k.
\end{cases}
\end{equation}
\end{itemize}
\end{lemma}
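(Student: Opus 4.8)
The plan is to dispatch part (\rmnum{1}) directly from the entrywise definition of the Kronecker product and then to use it as an index-tracking device for part (\rmnum{2}).

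For part (\rmnum{1}), recall that for an $n\times n$ matrix $\mathbf{A}$ the entry of $\mathbf{A}\otimes\mathbf{I}_2$ in row $2(i-1)+p$ and column $2(j-1)+q$ equals $[\mathbf{A}]_{i,j}[\mathbf{I}_2]_{p,q}$ for $p,q\in\{1,2\}$. Setting $i=j$ and $p=q$ forces $[\mathbf{I}_2]_{p,p}=1$ and yields $[\mathbf{A}\otimes\mathbf{I}_2]_{2i-1,2i-1}=[\mathbf{A}\otimes\mathbf{I}_2]_{2i,2i}=[\mathbf{A}]_{i,i}$, which is (\ref{833-pm-617}). Identity (\ref{834-pm-617}) is the mirror computation: $[\mathbf{I}_2\otimes\mathbf{A}]_{(p-1)n+i,(q-1)n+j}=[\mathbf{I}_2]_{p,q}[\mathbf{A}]_{i,j}$, and restricting to the diagonal gives the two indices $i$ and $i+n$ with common value $[\mathbf{A}]_{i,i}$. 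Both are one-line verifications.

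For part (\rmnum{2}), I would strip the identity factors off $\mathbf{T}_l=\mathbf{I}_2^{\otimes(l-1)}\otimes\mathbf{S}_{\beta\alpha}\otimes\mathbf{I}_2^{\otimes(N-l-1)}$ one at a time, each step replacing a diagonal entry by the diagonal entry of a smaller matrix via part (\rmnum{1}), until only the $4\times4$ block $\mathbf{S}_{\beta\alpha}$ survives. Writing $\mathbf{T}_l=\mathbf{C}\otimes\mathbf{I}_2^{\otimes(N-l-1)}$ with $\mathbf{C}=\mathbf{I}_2^{\otimes(l-1)}\otimes\mathbf{S}_{\beta\alpha}$, repeated use of (\ref{833-pm-617})—which sends a diagonal index $j$ to $\lceil j/2\rceil$—reduces $[\mathbf{T}_l]_{h_k,h_k}$ to $[\mathbf{C}]_{j',j'}$ with $j'=\lceil h_k/2^{N-l-1}\rceil$, and then repeated use of (\ref{834-pm-617})—which reduces a diagonal index modulo the current block size—peels off the remaining $l-1$ left factors and leaves a single diagonal entry of $\mathbf{S}_{\beta\alpha}$.

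The crux is to pin down which of the four diagonal positions of $\mathbf{S}_{\beta\alpha}$ is selected for each pair $(l,k)$. The transparent way is to read $h_k-1$ in binary: since $h_k-1=2^{N-1}+\cdots+2^{N-k}$, its digits (most significant first) are $1$ in positions $1,\ldots,k$ and $0$ afterward. The two peeling operations amount precisely to extracting the digits $b_l,b_{l+1}$ in positions $l$ and $l+1$, which index the diagonal of $\mathbf{S}_{\beta\alpha}$ through the local position $2b_l+b_{l+1}+1$. Hence one gets $(b_l,b_{l+1})=(1,1)$ when $l<k$, $(1,0)$ when $l=k$, and $(0,0)$ when $l>k$; reading the diagonal of $\mathbf{S}_{\beta\alpha}$ off (\ref{546pm511}), the middle case returns $-1$ and the other two return $-\frac{1-\xi_\beta}{1-\xi_\alpha}$, which is exactly (\ref{719-pm-617}).

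The step I expect to be the main obstacle is aligning the ceiling/modulo bookkeeping with the binary-digit reading and checking the boundary cases: $k=0$ (all digits $0$, so the branch $l>k$ always applies), $k=N$ (all digits $1$, and since $l\le N-1$ the branch $l<k$ always applies), and the observation that $l=k$ can occur only for $1\le k\le N-1$. Once the dictionary between peeling an identity factor and reading a binary digit of $h_k-1$ is fixed, the three cases follow by inspection of $\mathbf{S}_{\beta\alpha}$.
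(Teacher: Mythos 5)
Your proof is correct, and it reaches the result by a genuinely different organization of the same basic tool. Part (\rmnum{1}) is exactly the paper's (one-line) verification from the entrywise form of the Kronecker product. For part (\rmnum{2}) both you and the paper use part (\rmnum{1}) as the index-tracking device for stripping identity factors off $\mathbf{T}_l$, but the paper packages this as an induction on $N$: it writes $\mathbf{T}_l^{(N)}$ as $\mathbf{T}_l^{(N-1)}\otimes\mathbf{I}_2$ or $\mathbf{I}_2\otimes\mathbf{T}_{l-1}^{(N-1)}$ and uses $h_k^{(N)}=2h_k^{(N-1)}-1$ (for $k\leq N-1$) resp.\ $h_k^{(N)}=2^{N-1}+h_{k-1}^{(N-1)}$ (for $k\geq 1$), which leaves the two boundary pairs $(l,k)=(1,N)$ and $(l,k)=(N-1,0)$ outside the reach of the induction hypothesis and forces a separate direct inspection of those matrices. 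Your version unrolls the recursion all the way down to the single $4\times 4$ block $\mathbf{S}_{\beta\alpha}$ and replaces the index bookkeeping by reading the binary digits of $h_k-1$: the digit string is $k$ ones followed by $N-k$ zeros, the pair $(b_l,b_{l+1})$ selects the diagonal slot $2b_l+b_{l+1}+1$ of $\mathbf{S}_{\beta\alpha}$, and $(1,0)$ --- the only slot carrying $-1$ in (\ref{546pm511}) --- occurs precisely when $l=k$; the pattern $(0,1)$ never arises because the digits are nonincreasing. This handles all pairs $(l,k)$, including the two corner cases, uniformly, which is a mild gain in cleanliness over the paper's argument. The only thing you should write out explicitly rather than gesture at is the dictionary itself, namely that iterating (\ref{833-pm-617}) and (\ref{834-pm-617}) shows the diagonal entry of a tensor product of $2\times 2$ factors (with $\mathbf{S}_{\beta\alpha}$ occupying two adjacent slots) at index $j$ is the product of the factors' diagonal entries indexed by the binary digits of $j-1$; this follows by a short induction on the number of factors and closes the one step you flagged as the main obstacle.
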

\begin{proof}
\begin{itemize}
  \item [(\rmnum{1})] It is easy to verify the statement from the forms of $\mathbf{A}\otimes \mathbf{I}_2$ and $ \mathbf{I}_2 \otimes \mathbf{A}$ for any $\mathbf{A}$.
  \item [(\rmnum{2})] We prove by induction on $N$. When $N=2$, the statement is obvious from (\ref{546pm511}). For the induction step, let us write
  \begin{equation*} \label{714-am-617}
\mathbf{T}_{l}^{(N)}= \mathbf{I}_2^{\otimes(l-1)} \otimes \mathbf{S}_{\beta\alpha} \otimes  \mathbf{I}_2^{\otimes(N-l-1)}, ~(l=1,\cdots,N-1),
  \end{equation*}
  and
 \begin{equation*}
h_k^{(N)} = \begin{cases}
 2^{N-1} + 2^{N-2} + \cdots + 2^{N-k}+1&~~\textrm{for $k=1,\cdots,N$},\\
1&~~\textrm{for $k=0$.}
\end{cases}
\end{equation*}
 Suppose that the statement is true for $N-1$, that is,
  \begin{equation}\label{720-pm-617}
[\mathbf{T}_l^{(N-1)}]_{h_k^{(N-1)},h_k^{(N-1)}} = \begin{cases}
-1 & ~~\textrm{if}~~l=k,\\
-\frac{1-\xi_{\beta}}{1-\xi_{\alpha}}&  ~~\textrm{if}~~l\neq k
\end{cases}
\end{equation}
for each $l =1,\cdots,N-2$ and for each $k=0,\cdots,N-1$. For each $l=1,\cdots,N-1$, $\mathbf{T}_l^{(N)}$ can be written as either
\begin{equation*}
\mathbf{T}_l^{(N)} = \mathbf{T}_l^{(N-1)} \otimes \mathbf{I}_2,~~(l=1,\cdots,N-2),
\end{equation*}
or
\begin{equation*}
 \mathbf{T}_l^{(N)} = \mathbf{I}_2 \otimes \mathbf{T}_{l-1}^{(N-1)},~~(l=2,\cdots,N-1).
\end{equation*}
If $\mathbf{T}_l^{(N)} = \mathbf{T}_l^{(N-1)} \otimes \mathbf{I}_2$, then for each $k=0,\cdots, N-1$ and for each $l=1,\cdots,N-2$,
\begin{equation*}
\big[\mathbf{T}_l^{(N)}\big]_{h_k^{(N)},h_k^{(N)}}= \big[\mathbf{T}_l^{(N-1)} \otimes \mathbf{I}_2 \big]_{h_k^{(N)},h_k^{(N)}} = \big[\mathbf{T}_l^{(N-1)}\big]_{h_{k}^{(N-1)},h_{k}^{(N-1)}} = \begin{cases}
-1 & ~~\textrm{if}~~l=k,\\
-\frac{1-\xi_{\beta}}{1-\xi_{\alpha}}&  ~~\textrm{if}~~l\neq k
\end{cases}
\end{equation*}
by (\ref{833-pm-617}) and the induction hypothesis. If $ \mathbf{T}_l^{(N)} = \mathbf{I}_2 \otimes \mathbf{T}_{l-1}^{(N-1)}$, then  for each $k=1,\cdots, N$ and for each $l=2,\cdots, N-1$,
\begin{equation*}
\big[\mathbf{T}_l^{(N)}\big]_{h_k^{(N)},h_k^{(N)}}= \big[\mathbf{I}_2 \otimes \mathbf{T}_{l-1}^{(N-1)} \big]_{h_k^{(N)},h_k^{(N)}} = \big[\mathbf{T}_{l-1}^{(N-1)}\big]_{h_{k-1}^{(N-1)},h_{k-1}^{(N-1)}} = \begin{cases}
-1 & ~~\textrm{if}~~l=k,\\
-\frac{1-\xi_{\beta}}{1-\xi_{\alpha}}&  ~~\textrm{if}~~l\neq k
\end{cases}
\end{equation*}
by (\ref{834-pm-617}) and the induction hypothesis. Finally, we can easily verify that
\begin{eqnarray}
\big[\mathbf{T}_1^{(N)}\big]_{h_N^{(N)},h_N^{(N)}} = -\frac{1-\xi_{\beta}}{1-\xi_{\alpha}} = \big[\mathbf{T}_{N-1}^{(N)}\big]_{h_0^{(N)},h_0^{(N)}}
\end{eqnarray}
by observing the matrices.
\end{itemize}
\end{proof}
\begin{lemma}\label{146-am-520} For each $k=0,1,\cdots,N$ and  $N \geq 2$,
\begin{equation}\label{336-pm-524}
\big[\mathbf{A}_{\sigma}\big]_{h_k,h_k} =
\textrm{sgn}(\sigma)\prod_{j=1}^{k}\bigg(\frac{1-\xi_{j}}{1-\xi_{\sigma(j)}}  \bigg)^{j-1}\prod_{j=k+1}^{N}\bigg(\frac{1-\xi_{j}}{1-\xi_{\sigma(j)}}  \bigg)^{j-2}.
\end{equation}
\end{lemma}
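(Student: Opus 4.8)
The plan is to prove (\ref{336-pm-524}) by induction on the length $n$ of a word $\sigma = T_{a_n}\cdots T_{a_1}$ as in (\ref{411-pn-614}). The structural fact that makes this tractable is that the matrix $\mathbf{S}_{\beta\alpha}$ in (\ref{546pm511}) is upper triangular, so each factor $\mathbf{T}_l(\alpha,\beta)$ in (\ref{1243-am-519})---a tensor product of $\mathbf{S}_{\beta\alpha}$ with identity matrices---is upper triangular for every choice of labels $\alpha,\beta$. Hence any product of such matrices is again upper triangular, and the $(i,i)$ entry of a product of upper triangular matrices equals the product of their individual $(i,i)$ entries. Writing $\sigma' = T_{a_{n-1}}\cdots T_{a_1}$ and $\mathbf{A}_{\sigma'} = \mathbf{T}_{a_{n-1}}\cdots \mathbf{T}_{a_1}$, so that $\mathbf{A}_\sigma = \mathbf{T}_{a_n}\mathbf{A}_{\sigma'}$ by (\ref{417-pn-614}), I would therefore use
\begin{equation*}
[\mathbf{A}_\sigma]_{h_k,h_k} = [\mathbf{T}_{a_n}]_{h_k,h_k}\,[\mathbf{A}_{\sigma'}]_{h_k,h_k}
\end{equation*}
to peel off one transposition at a time.

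To organize the induction I would abbreviate the claimed right-hand side of (\ref{336-pm-524}) as $F(\tau)$ and set $g(j)=j-1$ for $j\le k$ and $g(j)=j-2$ for $j>k$, so that $F(\tau)=\textrm{sgn}(\tau)\prod_{j=1}^{N}\big((1-\xi_j)/(1-\xi_{\tau(j)})\big)^{g(j)}$. The base case $n=0$ is immediate, since $\mathbf{A}_{\textrm{id}}=\mathbf{I}$ gives $[\mathbf{A}_{\textrm{id}}]_{h_k,h_k}=1=F(\textrm{id})$. By the factorization above and the induction hypothesis $[\mathbf{A}_{\sigma'}]_{h_k,h_k}=F(\sigma')$, the inductive step reduces to verifying the single identity $F(\sigma)=[\mathbf{T}_{a_n}]_{h_k,h_k}\,F(\sigma')$.

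First I would record that $\sigma=T_{a_n}\sigma'$ differs from $\sigma'$ only in positions $a_n,a_n+1$, where it interchanges $\alpha:=\sigma'(a_n)$ and $\beta:=\sigma'(a_n+1)$; thus these are exactly the values that the transposition $T_{a_n}$ swaps, which determine $[\mathbf{T}_{a_n}]_{h_k,h_k}$ through (\ref{719-pm-617}). Since $\textrm{sgn}(\sigma)=-\textrm{sgn}(\sigma')$ and both branches of (\ref{719-pm-617}) carry a factor $-1$, the signs agree; in the ratio $F(\sigma)/F(\sigma')$ all numerators $1-\xi_j$ cancel, and only the two altered positions survive, giving
\begin{equation*}
\frac{F(\sigma)}{F(\sigma')}=-\prod_{j=1}^{N}\bigg(\frac{1-\xi_{\sigma'(j)}}{1-\xi_{\sigma(j)}}\bigg)^{g(j)}=-\bigg(\frac{1-\xi_\beta}{1-\xi_\alpha}\bigg)^{g(a_n+1)-g(a_n)}.
\end{equation*}

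The decisive computation is the exponent $g(a_n+1)-g(a_n)$: checking the two pieces of $g$ gives $g(l+1)-g(l)=1$ whenever $l\neq k$, while $g(k+1)-g(k)=(k-1)-(k-1)=0$. This is precisely the dichotomy of (\ref{719-pm-617}), so the ratio equals $-\tfrac{1-\xi_\beta}{1-\xi_\alpha}=[\mathbf{T}_{a_n}]_{h_k,h_k}$ when $a_n\neq k$ and equals $-1=[\mathbf{T}_{a_n}]_{h_k,h_k}$ when $a_n=k$, closing the induction. I expect the main obstacle to be the bookkeeping of the labels $\alpha,\beta$: they must be read from the partial product $\sigma'$ rather than from the index $a_n$ alone, and the coincidence $g(k+1)-g(k)=0$ at the single bond $l=k$ is exactly what reproduces the exceptional value $-1$. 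A side benefit of this argument is that the outcome depends on $\sigma$ only through the values $\sigma(j)$, so the formula is automatically independent of the chosen word, reflecting the consistency conditions for the $\mathbf{T}_i$.
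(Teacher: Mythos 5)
Your proof is correct, and it takes a genuinely different route from the paper's. The paper proves Lemma \ref{146-am-520} by induction on the system size $N$: a permutation $\sigma\in S_{N+1}$ is decomposed as $T_K T_{K+1}\cdots T_N$ times the extension of some $\sigma'\in S_N$, the index $h_k$ is tracked through the tensor-product structure via the auxiliary quantities $g_k$, and the inductive step splits into the cases $K=N+1$ versus $K<N+1$ and $k<K$ versus $k\geq K$. You instead fix $N$ and induct on the length of a word $\sigma=T_{a_n}\cdots T_{a_1}$, peeling off one transposition at a time. Both arguments rest on the same two ingredients: the multiplicativity of the $(h_k,h_k)$ entry over the product $\mathbf{T}_{a_n}\cdots\mathbf{T}_{a_1}$ (which you justify directly from the upper triangularity of $\mathbf{S}_{\beta\alpha}$ in (\ref{546pm511}), hence of each $\mathbf{T}_l$ and of their products, where the paper cites Lemma 3.1(d) of \cite{Lee-2017} for the same fact), and the single-factor evaluation (\ref{719-pm-617}) from Lemma \ref{340am629}. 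But your inductive step collapses to the one-line check that the claimed formula satisfies the cocycle relation $F(T_l\sigma')=[\mathbf{T}_l]_{h_k,h_k}\,F(\sigma')$, with the exponent jump $g(l+1)-g(l)$ equal to $1$ for $l\neq k$ and $0$ for $l=k$ exactly mirroring the dichotomy in (\ref{719-pm-617}); this eliminates the paper's case analysis and the $h_k^{(N)}$ bookkeeping, and makes the independence of the chosen word transparent. The one point that must stay explicit, and which you correctly flag, is that the labels $\alpha,\beta$ attached to $\mathbf{T}_{a_n}$ are $\sigma'(a_n)$ and $\sigma'(a_n+1)$, read off from the partial product rather than from the index $a_n$ alone. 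What the paper's induction on $N$ buys in exchange is that it runs parallel to the proof of Lemma \ref{340am629} itself, so the two inductions form a single structural argument about adding a particle to the system.
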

\begin{proof}
If $\sigma = 1$, the statement is trivial. We will show the statement is true for $\sigma \neq 1$. We prove the statement by induction on $N$. When $N=2$, it can be easily shown from (\ref{546pm511}) that (\ref{336-pm-524}) holds  since $\mathbf{A}_{21} =\mathbf{T}_1(1,2)$. Suppose that (\ref{336-pm-524}) holds for $N$. For any $\sigma \in S_{N+1}$, there are $\sigma' \in S_{N}$ and an integer $K\in \{1,\cdots, N+1\}$ such that $\sigma(i) = \sigma'(i)$ for $1\leq i\leq K-1$, $\sigma(K) = N+1$ and $\sigma(i) = \sigma'(i-1)$ for $K+1 \leq i \leq N+1$.  Let $\sigma' = T'_{a_n}\cdots T'_{a_1}$ where $T'_{a_i}$ are simple transpositions in $S_N$ so that $a_1,\cdots,a_n$ are some integers in $ \{1,\cdots,N-1\}$, and let  $T_{a_i}$ be the extension of $T'_{a_i}$ to $\{1,\cdots, N+1\}$ such that $T_{a_i}(k) = T'_{a_i}(k)$ for $k=1,\cdots, N$ and $T_{a_i}(N+1) = N+1$.\\ \\
If $K<N+1$, then
\begin{equation*}
\sigma =T_KT_{K+1}\cdots T_NT_{a_n}\cdots T_{a_1}
\end{equation*}
and if $K=N+1$, then
\begin{equation*}
\sigma =T_{a_n}\cdots T_{a_1}.
\end{equation*}
The matrices $\mathbf{T}_{a_i}$ and $\mathbf{T'}_{a_i}$ corresponding to $T_{a_i}$ and $T'_{a_i}$ satisfy
 \begin{equation*}
 \begin{aligned}
\mathbf{T}_{a_i} =&~ \mathbf{I}_2^{\otimes (a_i-1)} \otimes \mathbf{S}_{\beta\alpha}\otimes \mathbf{I}_2^{\otimes (N-a_i)}\\
=&~\mathbf{I}_2^{\otimes (a_i-1)} \otimes \mathbf{S}_{\beta\alpha}\otimes \mathbf{I}_2^{\otimes (N-a_i-1)}\otimes \mathbf{I}_2\\
=&~\mathbf{T'}_{a_i} \otimes \mathbf{I}_2.
\end{aligned}
\end{equation*}
Let
\begin{equation*}
g_k = \begin{cases}
2h_k-1 & ~~\textrm{for}~k=0,1,\cdots,N,\\
2h_N&~~\textrm{for}~k=N+1.
\end{cases}
\end{equation*}
First, suppose that $K<N+1$ and let us show that for $\mathbf{A}_{\sigma} = \mathbf{T}_K\cdots \mathbf{T}_N\mathbf{T}_{a_n}\cdots \mathbf{T}_{a_1}$,
\begin{equation}\label{336-pm-66}
[\mathbf{A}_{\sigma}]_{g_k,g_k} =
\textrm{sgn}(\sigma)\prod_{j=1}^{k}\bigg(\frac{1-\xi_{j}}{1-\xi_{\sigma(j)}}  \bigg)^{j-1}\prod_{j=k+1}^{N+1}\bigg(\frac{1-\xi_{j}}{1-\xi_{\sigma(j)}}  \bigg)^{j-2}
\end{equation}
for each $k=0,\cdots, N+1$.  Using Lemma 3.1(d) in \cite{Lee-2017} and Lemma \ref{340am629}(\rmnum{1}), we have
\begin{align}
\big[\mathbf{A}_{\sigma}\big]_{g_k,g_k} =&~\big[\mathbf{T}_K\big]_{g_k,g_k}\cdots\big[\mathbf{T}_N\big]_{g_k,g_k}\big[\mathbf{T}_{a_n}\big]_{g_k,g_k}\cdots\big[\mathbf{T}_{a_1}\big]_{g_k,g_k} \nonumber \\
=&~\big[\mathbf{T}_K\big]_{g_k,g_k}\cdots\big[\mathbf{T}_N\big]_{g_k,g_k}\big[\mathbf{T'}_{a_n}\big]_{h_{k},h_{k}}\cdots\big[\mathbf{T'}_{a_1}\big]_{h_{k},h_{k}}  \nonumber \\
=&~\big[\mathbf{T}_K\big]_{g_k,g_k}\cdots\big[\mathbf{T}_N\big]_{g_k,g_k}\big[\mathbf{A}_{\sigma'}\big]_{h_{k},h_{k}}.\label{510-am-66}
\end{align}
If  $k<K$, then  by the induction hypothesis and Lemma \ref{340am629}(\rmnum{2}), (\ref{510-am-66}) becomes
\begin{equation}\label{818-pm-66}
\begin{aligned}
\big[\mathbf{A}_{\sigma}\big]_{g_k,g_k}=&~ \bigg(-\frac{1-\xi_{N+1}}{1-\xi_{\sigma'(K)}}\bigg)\bigg(-\frac{1-\xi_{N+1}}{1-\xi_{\sigma'(K+1)}}\bigg)\cdots  \bigg(-\frac{1-\xi_{N+1}}{1-\xi_{\sigma'(N)}}\bigg) \\
& \hspace{0.5cm} \times
\textrm{sgn}(\sigma')\prod_{j=1}^{k}\bigg(\frac{1-\xi_{j}}{1-\xi_{\sigma'(j)}}  \bigg)^{j-1}\prod_{j=k+1}^{N}\bigg(\frac{1-\xi_{j}}{1-\xi_{\sigma'(j)}}  \bigg)^{j-2},
\end{aligned}
\end{equation}
and (\ref{818-pm-66}) becomes
\begin{equation}
\begin{aligned}
&~(-1)^{N-K+1}\textrm{sgn}(\sigma')\frac{(1-\xi_{N+1})^{N-K+1}}{(1-\xi_{\sigma(K+1)})(1-\xi_{\sigma(K+2)})\cdots(1-\xi_{\sigma(N+1)})} \\
& \hspace{0.5cm} \times\prod_{j=1}^{k}\bigg(\frac{1-\xi_{j}}{1-\xi_{\sigma(j)}}  \bigg)^{j-1}\prod_{j=k+1}^{K-1}\bigg(\frac{1-\xi_{j}}{1-\xi_{\sigma(j)}}\bigg)^{j-2}\prod_{j=K}^N\bigg(\frac{1-\xi_j}{1-\xi_{\sigma(j+1)}}\bigg)^{j-2}\label{336-am-526}
\end{aligned}
\end{equation}
from the relation between $\sigma$ and $\sigma'$. Finally, if we use  $(-1)^{N-K+1}\textrm{sgn}(\sigma')=\textrm{sgn}(\sigma)$, multiply (\ref{336-am-526}) by
\begin{equation*}
\bigg(\frac{1-\xi_{N+1}}{1-\xi_{\sigma(K)}}\bigg)^{K-2} = 1,
\end{equation*}
and rearrange terms, then (\ref{336-am-526}) becomes (\ref{336-pm-66}). If $k \geq K$, then  (\ref{510-am-66}) becomes
\begin{equation}\label{845-pm-66}
\begin{aligned}
\big[\mathbf{A}_{\sigma}\big]_{g_k,g_k}=&~ \bigg(-\frac{1-\xi_{N+1}}{1-\xi_{\sigma'(K)}}\bigg)\cdots
\bigg(-\frac{1-\xi_{N+1}}{1-\xi_{\sigma'(k-1)}}\bigg)(-1)\bigg(-\frac{1-\xi_{N+1}}{1-\xi_{\sigma'(k+1)}}\bigg)\cdots  \bigg(-\frac{1-\xi_{N+1}}{1-\xi_{\sigma'(N)}}\bigg) \\
& \hspace{0.5cm} \times
\textrm{sgn}(\sigma')\prod_{j=1}^{k}\bigg(\frac{1-\xi_{j}}{1-\xi_{\sigma'(j)}}  \bigg)^{j-1}\prod_{j=k+1}^{N}\bigg(\frac{1-\xi_{j}}{1-\xi_{\sigma'(j)}}  \bigg)^{j-2}
\end{aligned}
\end{equation}
by Lemma \ref{340am629}(\rmnum{2}), and  (\ref{845-pm-66}) becomes
\begin{equation}\label{923-pm-66}
\begin{aligned}
&~(-1)^{N-K+1}\textrm{sgn}(\sigma')\frac{(1-\xi_{N+1})^{N-K}}{(1-\xi_{\sigma(K+1)})\cdots(1-\xi_{\sigma(k)})(1-\xi_{\sigma(k+2)})
\cdots(1-\xi_{\sigma(N+1)})} \\
& \hspace{0.5cm} \times\prod_{j=1}^{K-1}\bigg(\frac{1-\xi_{j}}{1-\xi_{\sigma(j)}}  \bigg)^{j-1}\prod_{j=K}^{k}\bigg(\frac{1-\xi_{j}}{1-\xi_{\sigma(j+1)}}\bigg)^{j-1}\prod_{j=k+1}^N\bigg(\frac{1-\xi_j}{1-\xi_{\sigma(j+1)}}\bigg)^{j-2}
\end{aligned}
\end{equation}
from the relation between $\sigma$ and $\sigma'$. Again, if we use  $(-1)^{N-K+1}\textrm{sgn}(\sigma')=\textrm{sgn}(\sigma)$, multiply (\ref{923-pm-66}) by
\begin{equation*}
\bigg(\frac{1-\xi_{N+1}}{1-\xi_{\sigma(K)}}\bigg)^{K-1} = 1,
\end{equation*}
and rearrange terms, then (\ref{923-pm-66}) becomes (\ref{336-pm-66}). \\ \\ Second, suppose that $K=N+1$ and let us show (\ref{336-pm-66}) for each $k=0,\cdots,N+1$  for $\mathbf{A}_{\sigma} = \mathbf{T}_{a_n}\cdots \mathbf{T}_{a_1}$. Using
\begin{equation*}
\big[\mathbf{T}_{a_i}\big]_{g_k,g_k} = \big[\mathbf{T}'_{a_i} \otimes \mathbf{I}_2\big]_{g_k,g_k}=\big[\mathbf{T}'_{a_i}\big]_{h_k,h_k}
\end{equation*}
and the induction hypothesis, we have
\begin{equation*}\label{532-am-629}
\big[\mathbf{A}_{\sigma}\big]_{g_k,g_k} = \big[\mathbf{A}_{\sigma'}\big]_{h_k,h_k}=\textrm{sgn}(\sigma')\prod_{j=1}^{k}\bigg(\frac{1-\xi_{j}}{1-\xi_{\sigma'(j)}}  \bigg)^{j-1}\prod_{j=k+1}^{N}\bigg(\frac{1-\xi_{j}}{1-\xi_{\sigma'(j)}}  \bigg)^{j-2},
\end{equation*}
which  is equal to
\begin{equation*}
\textrm{sgn}(\sigma)\prod_{j=1}^{k}\bigg(\frac{1-\xi_{j}}{1-\xi_{\sigma(j)}}  \bigg)^{j-1}\prod_{j=k+1}^{N+1}\bigg(\frac{1-\xi_{j}}{1-\xi_{\sigma(j)}}  \bigg)^{j-2}
\end{equation*}
because $\sigma(i) = \sigma'(i)$ for $i=1,\cdots, N$ and $\sigma(N+1) = N+1$. This completes the proof.
\end{proof}
Hence, we obtained the explicit formulas for some diagonal terms of $\mathbf{P}_{Y}({X};t)$
\begin{equation}\label{433}
P_{(Y,\nu^{(k)})}(X,\nu^{(k)};t) = \dashint_C\cdots \dashint_C\sum_{\sigma\in {S}_N}\big[\mathbf{A}_{\sigma}\big]_{h_k,h_k}\prod_{i=1}^N\Big(\xi_{\sigma(i)}^{x_i-y_{\sigma(i)}-1}e^{\varepsilon(\xi_i) t}\Big) d\xi_1\cdots d\xi_N,
\end{equation}
where $\big[\mathbf{A}_{\sigma}\big]_{h_k,h_k}$ is given by (\ref{336-pm-524}).
\begin{remark}
In (\ref{433}), if $k=0$ or $N$, then the formula is for the TASEP with a single species.  In these cases, (\ref{336-pm-524}) is equal to
\begin{equation*}
[\mathbf{A}_{\sigma}]_{h_0,h_0} = [\mathbf{A}_{\sigma}]_{1,1} =[\mathbf{A}_{\sigma}]_{2^N,2^N}=[\mathbf{A}_{\sigma}]_{h_N,h_N} = \prod_{(\beta,\alpha)}\bigg(-\frac{1-\xi_{\beta}}{1-\xi_{\alpha}}\bigg)
\end{equation*}
where the product is over all inversions $(\beta,\alpha)$ in $\sigma$.
\end{remark}
\subsection{Some results on the  Vandermonde matrix}
First, we review the generalized Lapalce expansion formula for the determinant in \cite{Murota-book,Zeidler-book} and provide  some new results to prove an algebraic identity which is used in the proof of Theorem \ref{205-am-618}. Let $\mathbf{A}$ be an $N\times N$ matrix. For $I,J \subset \{1,\cdots, N\}$, the submatrix of $\mathbf{A}$ with entries $[\mathbf{A}]_{i,j}$ with $i\in I$ and $j\in J$ is denoted by $\mathbf{A}[I,J]$, and let $I^c = \{1,\cdots,N\}\setminus I$.
\begin{theorem}\cite[p.33]{Murota-book}\label{242-am-69}
For an $N\times N$ matrix $\mathbf{A}$ and $I \subset \{1,\cdots, N\}$, it holds that
\begin{equation*}
\det \mathbf{A} = \sum_{\substack{J \subset \{1,\cdots,N\}, \\ |J| = |I|}}\textrm{sgn}(I,J)\det\mathbf{A}[I,J]\det\mathbf{A}[I^c,J^c].
\end{equation*}
Here, $\textrm{sgn}(I,J)$ is the sign of the permutation
\begin{equation*}
 \left(
               \begin{array}{cccccc}
                 i_1 & \cdots & i_k & i_{k+1} & \cdots & i_N\\
                 j_1 & \cdots & j_k & j_{k+1} & \cdots & j_N \\
               \end{array}
             \right)
\end{equation*}
where $I=\{i_1,\cdots, i_k\}$ and $I^c=\{i_{k+1},\cdots, i_N\}$ with $i_1<\cdots<i_k$ and $i_{k+1}<\cdots<i_N$, and $J=\{j_1,\cdots, j_k\}$ and $J^c=\{j_{k+1},\cdots, j_N\}$ with $j_1<\cdots<j_k$ and $j_{k+1}<\cdots<j_N$.
\end{theorem}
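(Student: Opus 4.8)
The plan is to prove the identity directly from the Leibniz expansion of the determinant by partitioning the symmetric group $S_N$ according to the image of the index set $I$. Writing $I = \{i_1 < \cdots < i_k\}$ and $I^c = \{i_{k+1} < \cdots < i_N\}$, I would start from
\[
\det \mathbf{A} = \sum_{\sigma \in S_N} \textrm{sgn}(\sigma) \prod_{i=1}^N [\mathbf{A}]_{i,\sigma(i)},
\]
and observe that each $\sigma$ determines a $k$-subset $J := \sigma(I)$. Grouping the sum by the value of $J$ converts the single sum over $S_N$ into an outer sum over all $k$-subsets $J$ of $\{1,\cdots,N\}$ and an inner sum over those $\sigma$ with $\sigma(I) = J$.

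The next step is to parametrize the permutations with $\sigma(I) = J$. Writing $J = \{j_1 < \cdots < j_k\}$ and $J^c = \{j_{k+1} < \cdots < j_N\}$, every such $\sigma$ is uniquely determined by a pair $(\alpha, \beta) \in S_k \times S_{N-k}$ via $\sigma(i_m) = j_{\alpha(m)}$ for $1 \leq m \leq k$ and $\sigma(i_{k+m}) = j_{k+\beta(m)}$ for $1 \leq m \leq N-k$. Under this parametrization the product over rows factors as
\[
\prod_{i=1}^N [\mathbf{A}]_{i,\sigma(i)} = \Big(\prod_{m=1}^k [\mathbf{A}]_{i_m, j_{\alpha(m)}}\Big)\Big(\prod_{m=1}^{N-k}[\mathbf{A}]_{i_{k+m}, j_{k+\beta(m)}}\Big),
\]
so that, provided the signs behave correctly, summing over $\alpha$ and $\beta$ reproduces exactly $\det \mathbf{A}[I,J]$ and $\det \mathbf{A}[I^c, J^c]$ by the Leibniz formula applied to those submatrices (whose rows and columns are already listed in increasing order, matching the convention used to define the submatrices).

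The heart of the argument — and the step I expect to require the most care — is the sign identity $\textrm{sgn}(\sigma) = \textrm{sgn}(I,J)\,\textrm{sgn}(\alpha)\,\textrm{sgn}(\beta)$. To establish it, I would factor $\sigma = \pi_{I,J} \circ \rho$, where $\pi_{I,J}$ is the permutation sending $i_m \mapsto j_m$ and $i_{k+m} \mapsto j_{k+m}$, so that $\textrm{sgn}(\pi_{I,J}) = \textrm{sgn}(I,J)$ precisely by the definition of $\textrm{sgn}(I,J)$ in the statement, and $\rho$ is the \emph{block-diagonal} permutation preserving the two blocks $I$ and $I^c$ setwise with $\rho(i_m) = i_{\alpha(m)}$ and $\rho(i_{k+m}) = i_{k+\beta(m)}$. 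Since $\rho$ respects the blocks, $\textrm{sgn}(\rho) = \textrm{sgn}(\alpha)\,\textrm{sgn}(\beta)$, and multiplicativity of the sign under composition yields the claim. Substituting back and factoring the double sum over $(\alpha,\beta)$ into a product of two independent sums then gives
\[
\det \mathbf{A} = \sum_{|J|=k} \textrm{sgn}(I,J)\,\det\mathbf{A}[I,J]\,\det\mathbf{A}[I^c,J^c],
\]
which is the assertion. The only genuinely delicate point is keeping the bijection $\sigma \leftrightarrow (J,\alpha,\beta)$ consistent with the increasing orderings used both to define $\textrm{sgn}(I,J)$ and to read off the submatrix determinants; everything else is routine bookkeeping.
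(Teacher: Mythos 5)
Your proof is correct. Note, however, that the paper does not prove Theorem \ref{242-am-69} at all: it is quoted verbatim from Murota's book (the generalized Laplace expansion), so there is no in-paper argument to compare against. Your derivation --- partitioning $S_N$ by $J=\sigma(I)$, parametrizing the fiber by $(\alpha,\beta)\in S_k\times S_{N-k}$ via the increasing enumerations of $I,I^c,J,J^c$, and factoring $\sigma=\pi_{I,J}\circ\rho$ to get $\textrm{sgn}(\sigma)=\textrm{sgn}(I,J)\,\textrm{sgn}(\alpha)\,\textrm{sgn}(\beta)$ --- is the standard proof and is sound. The one step you compress, $\textrm{sgn}(\rho)=\textrm{sgn}(\alpha)\,\textrm{sgn}(\beta)$ for a permutation preserving $I$ and $I^c$ setwise, deserves a sentence (e.g.\ write $\alpha$ and $\beta$ as products of transpositions inside $I$ and inside $I^c$ respectively; each lifts to a genuine transposition of $\{1,\cdots,N\}$, so the signs multiply), since it is not literally an inversion count when $I$ and $I^c$ interleave; with that noted, the argument is complete.
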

The next result generalizes Lemma 3.3 in \cite{Lee-2017}.
\begin{corollary}\label{1148-pm-630}
Let $\mathbf{A}$ be the $N \times N$ Vandermonde matrix with $[\mathbf{A}]_{i,j} = \xi_j^{i-1}$ and $I=\{N-k+1,\cdots, N\}$. Then,
\begin{equation}
\sum_{\substack{J\subset\{1,\cdots,N\}, \\|J|=k}}\textrm{sgn}\big(I,J\big)\prod_{i\in J}(\xi_i-1)^{N-k}\prod_{\substack{i<j,\\i,j\in J}}(\xi_j-\xi_i)\prod_{\substack{i<j,\\ i,j \in J^c}}(\xi_j-\xi_i) = \prod_{1\leq i<j\leq N}(\xi_j - \xi_i). \label{611-am-69}
\end{equation}
\end{corollary}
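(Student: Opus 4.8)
The plan is to realize the left-hand side of (\ref{611-am-69}) as the generalized Laplace expansion (Theorem \ref{242-am-69}) of a single $N\times N$ determinant whose value is exactly the full Vandermonde product on the right. The key is the choice of matrix: expanding the Vandermonde matrix $\mathbf{A}$ itself would produce column factors $\prod_{j\in J}\xi_j^{N-k}$ rather than the desired $\prod_{i\in J}(\xi_i-1)^{N-k}$, so instead I would introduce the modified matrix $\mathbf{B}$ defined by
\begin{equation*}
[\mathbf{B}]_{i,j} = \begin{cases} \xi_j^{i-1} & \textrm{if } 1\le i\le N-k,\\ (\xi_j-1)^{N-k}\,\xi_j^{\,i-1-(N-k)} & \textrm{if } N-k+1\le i\le N.\end{cases}
\end{equation*}

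First I would verify that $\det\mathbf{B} = \prod_{1\le i<j\le N}(\xi_j-\xi_i)$. Viewing each row as the evaluation at $\xi_1,\dots,\xi_N$ of a one-variable polynomial, the top $N-k$ rows are the monomials $1,\xi,\ldots,\xi^{N-k-1}$ while the bottom $k$ rows are $(\xi-1)^{N-k},(\xi-1)^{N-k}\xi,\ldots,(\xi-1)^{N-k}\xi^{k-1}$, of degrees $N-k,\ldots,N-1$ with leading coefficient $1$. Hence $\mathbf{B}=\mathbf{M}\mathbf{A}$, where $\mathbf{M}$ records the coefficients of these polynomials in the monomial basis; $\mathbf{M}$ is lower triangular with $1$'s on the diagonal, so $\det\mathbf{M}=1$ and $\det\mathbf{B}=\det\mathbf{A}=\prod_{1\le i<j\le N}(\xi_j-\xi_i)$, the claimed right-hand side.

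Next I would apply Theorem \ref{242-am-69} to $\mathbf{B}$ with $I=\{N-k+1,\ldots,N\}$, so $I^c=\{1,\ldots,N-k\}$. The block $\mathbf{B}[I^c,J^c]$ has entries $\xi_j^{i-1}$ with $i\in\{1,\ldots,N-k\}$ and $j\in J^c$, an ordinary $(N-k)\times(N-k)$ Vandermonde matrix, whence $\det\mathbf{B}[I^c,J^c]=\prod_{i<j,\,i,j\in J^c}(\xi_j-\xi_i)$. For $\mathbf{B}[I,J]$, factoring $(\xi_j-1)^{N-k}$ out of each column $j\in J$ leaves the entries $\xi_j^{\,m}$ with $m=0,\ldots,k-1$, again a Vandermonde determinant, so $\det\mathbf{B}[I,J]=\prod_{i\in J}(\xi_i-1)^{N-k}\prod_{i<j,\,i,j\in J}(\xi_j-\xi_i)$. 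Substituting both evaluations into the Laplace expansion of $\det\mathbf{B}$ reproduces the left-hand side of (\ref{611-am-69}) term by term, and equating it with $\det\mathbf{B}=\prod_{1\le i<j\le N}(\xi_j-\xi_i)$ gives the identity.

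The only non-routine point is the evaluation $\det\mathbf{B}=\det\mathbf{A}$, that is, the observation that replacing the bottom $k$ Vandermonde rows by the $(\xi-1)^{N-k}$-weighted rows leaves the determinant unchanged because the associated change-of-basis matrix is unitriangular. Everything else is automatic: the two sub-determinants are plain Vandermonde evaluations, and the signs $\textrm{sgn}(I,J)$ need never be computed by hand, since they are built into the statement of Theorem \ref{242-am-69}.
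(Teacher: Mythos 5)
Your proposal is correct and takes essentially the same route as the paper: the paper's proof applies Theorem \ref{242-am-69} to recognize the left-hand side as the determinant of exactly your matrix $\mathbf{B}$ (the display (\ref{747})), and then reduces that determinant to the Vandermonde one by row operations, which is precisely your observation that $\mathbf{B}=\mathbf{M}\mathbf{A}$ with $\mathbf{M}$ unitriangular. The only difference is presentational --- row reduction versus the explicit unitriangular factorization --- and both hinge on the same fact that the bottom $k$ rows are monic polynomials of degrees $N-k,\dots,N-1$ evaluated at the $\xi_j$.
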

\begin{proof}
The left hand side of (\ref{611-am-69}) is equal to
\begin{equation}\label{747}
\det\left[
  \begin{array}{cccc}
    1 & 1 & \cdots & 1  \\
    \xi_1 & \xi_2 & \cdots & \xi_N  \\
    \vdots &  \vdots &  & \vdots \\
    \xi_1^{N-k-1} &\xi_2^{N-k-1}  & \cdots&   \xi_N^{N-k-1}\\
    (\xi_1-1)^{N-k} & (\xi_2-1)^{N-k} & \cdots & (\xi_N-1)^{N-k}  \\
    \xi_1(\xi_1-1)^{N-k} & \xi_2(\xi_2-1)^{N-k} & \cdots & \xi_N(\xi_N-1)^{N-k}  \\
     \vdots& \vdots  &  &  \vdots  \\
    \xi_1^{k-1}(\xi_1-1)^{N-k} & \xi_2^{k-1}(\xi_2-1)^{N-k} & \cdots & \xi_N^{k-1}(\xi_N-1)^{N-k}  \\
  \end{array}
\right]
\end{equation}
by  Theorem \ref{242-am-69}. We perform row operations (adding multiples of the first $(N-k)$ rows to the $(N-k+1)^{\textrm{th}}$ row) which make the $(N-k+1)^{\textrm{th}}$ row $(\xi_1^{N-k}~~\xi_2^{N-k}~~\cdots~~\xi_N^{N-k})$
without changing the determinant. We repeat this procedure to change the form  of (\ref{747}) to the Vandermonde determinant.
\end{proof}
\begin{lemma}\label{1144-pm-630}
For $N\geq 2$,
\begin{equation*}
\sum_{\sigma \in S_N}\textrm{sgn}(\sigma)\prod_{j=2}^N\frac{1}{(\xi_{\sigma(N+1-j)}-1)^{j-1}} = \prod_{j=1}^N\frac{1}{(\xi_j -1)^{N-1}}\prod_{1\leq i<j \leq N}(\xi_j - \xi_i).
\end{equation*}
\end{lemma}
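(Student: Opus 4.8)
The plan is to recognize the left-hand side as the Leibniz expansion of a determinant and then reduce that determinant to a Vandermonde determinant in shifted variables. The first step is purely bookkeeping: reindex the inner product by setting $m = N+1-j$, so that $j-1 = N-m$ and $j$ running over $2,\dots,N$ corresponds to $m$ running over $1,\dots,N-1$. This turns the summand into
\begin{equation*}
\textrm{sgn}(\sigma)\prod_{m=1}^{N-1}\frac{1}{(\xi_{\sigma(m)}-1)^{N-m}}
= \textrm{sgn}(\sigma)\prod_{m=1}^{N}\frac{1}{(\xi_{\sigma(m)}-1)^{N-m}},
\end{equation*}
where in the last step I have harmlessly included the factor for $m=N$, which equals $1$ since the exponent $N-m$ vanishes. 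By the Leibniz formula, summing this over $\sigma \in S_N$ gives exactly
\begin{equation*}
\sum_{\sigma\in S_N}\textrm{sgn}(\sigma)\prod_{m=1}^N \frac{1}{(\xi_{\sigma(m)}-1)^{N-m}}
= \det\!\left[\frac{1}{(\xi_n-1)^{N-m}}\right]_{m,n=1}^N,
\end{equation*}
the entry in row $m$ and column $n$ being $(\xi_n-1)^{-(N-m)}$.

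Next I would clear denominators by column operations: factor $\dfrac{1}{(\xi_n-1)^{N-1}}$ out of the $n^{\textrm{th}}$ column for each $n=1,\dots,N$. This produces the prefactor $\prod_{n=1}^N (\xi_n-1)^{-(N-1)}$ and leaves the determinant of the matrix with $(m,n)$-entry $(\xi_n-1)^{N-1}\cdot(\xi_n-1)^{-(N-m)} = (\xi_n-1)^{m-1}$. Writing $\eta_n := \xi_n - 1$, this is precisely the Vandermonde determinant $\det[\eta_n^{m-1}]_{m,n=1}^N = \prod_{1\le i<j\le N}(\eta_j-\eta_i)$. Since $\eta_j - \eta_i = (\xi_j-1)-(\xi_i-1) = \xi_j-\xi_i$, the determinant equals $\prod_{1\le i<j\le N}(\xi_j-\xi_i)$, and combining with the extracted prefactor yields exactly the right-hand side of the claimed identity.

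There is essentially no serious obstacle here; the proof is a two-line reduction once the correct viewpoint is adopted. The only point that requires care — and the step I would flag as the place an error is most likely to creep in — is the reindexing and the tracking of which exponent attaches to which position: one must verify that the omitted position (corresponding to the exponent $0$) is handled consistently so that the signed sum genuinely matches the Leibniz expansion of a full $N\times N$ determinant, with rows indexed by $m=1,\dots,N$ and columns by the arguments $\xi_1,\dots,\xi_N$. After that, the column factorization and the Vandermonde identity are routine. (Note also the structural parallel with Corollary~\ref{1148-pm-630}, which likewise reduces a signed sum to a Vandermonde determinant via a determinantal reformulation; the same strategy applies.)
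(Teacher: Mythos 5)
Your proof is correct and follows essentially the same route as the paper: both identify the signed sum as the Leibniz expansion of a determinant, extract the symmetric factor $\prod_{n}(\xi_n-1)^{-(N-1)}$, and reduce what remains to the Vandermonde determinant in the shifted variables $\xi_n-1$, whose pairwise differences coincide with those of the $\xi_n$. The only cosmetic difference is that the paper clears the denominators inside the sum before invoking the determinant, whereas you form the determinant of reciprocal entries first and then factor the columns; the computation is the same.
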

\begin{proof}
We obtain the result from
\begin{equation*}
\begin{aligned}
\prod_{j=2}^N\frac{1}{(\xi_{\sigma(N+1-j)}-1)^{j-1}} =\frac{(\xi_{\sigma(2)}-1)(\xi_{\sigma(3)}-1)^2\cdots(\xi_{\sigma(N)}-1)^{N-1}}{ \prod_{j=1}^N{(\xi_j -1)^{N-1}}}
\end{aligned}
\end{equation*}
and
\begin{equation*}
\begin{aligned}
& \sum_{\sigma \in S_N}\textrm{sgn}(\sigma)(\xi_{\sigma(2)}-1)(\xi_{\sigma(3)}-1)^2\cdots(\xi_{\sigma(N)}-1)^{N-1} \\
=&~
\det\left[
  \begin{array}{cccccc}
    1 & (\xi_1-1) & (\xi_1-1)^2 & \cdots & (\xi_1-1)^{N-2} & (\xi_1-1)^{N-1} \\
    1 & (\xi_2-1) & (\xi_2-1)^2 & \cdots & (\xi_2-1)^{N-2} & (\xi_2-1)^{N-1} \\
    \vdots & \vdots & \vdots &  & \vdots& \vdots\\
    1 & (\xi_N-1) & (\xi_N-1)^2 & \cdots & (\xi_N-1)^{N-2}& (\xi_N-1)^{N-1} \\
  \end{array}
\right]\\
=&~
\det\left[
  \begin{array}{cccccc}
    1 & \xi_1 & \xi_1^2 & \cdots & \xi_1^{N-2} & \xi_1^{N-1} \\
    1 & \xi_2 & \xi_2^2 & \cdots & \xi_2^{N-2} & \xi_2^{N-1} \\
    \vdots & \vdots & \vdots &  & \vdots& \vdots\\
    1 & \xi_N & \xi_N^2 & \cdots & \xi_N^{N-2}& \xi_N^{N-1} \\
  \end{array}
\right] = \prod_{1\leq i<j \leq N}(\xi_j - \xi_i).
\end{aligned}
\end{equation*}
\end{proof}
\section{Proofs of theorems}
We will compute
\begin{equation}
\mathbb{P}_{(Y,\nu^{(k)})}(E_{t,k,x}) = \sum_{\substack{x_k<x_{k+1}<\cdots<x_N,\\x_1=x,x_2=x+1,\cdots,x_k = x+k-1}} P_{(Y,\nu^{(k)})}(X,\nu^{(k)};t).\label{1219-am-520}
\end{equation}
The integrand hidden  in the form of $P_{(Y,\nu^{(k)})}(X,\nu^{(k)};t)$ in (\ref{1219-am-520}) has a factor
\begin{equation}\label{105-am-67}
\sum_{\sigma \in S_N}\sum_{i_1,\cdots,i_{N-k} = 1}^{\infty} \big[\mathbf{A}_{\sigma}\big]_{h_k,h_k}\xi_{\sigma(2)}\xi_{\sigma(3)}^{2}\cdots\xi_{\sigma(k)}^{k-1}\xi_{\sigma(k+1)}^{k-1+i_1}\cdots\xi_{\sigma(N)}^{k-1+i_1+\cdots+i_{N-k}}
\end{equation}
with positive integers $i_1,\cdots,i_{N-k}$. Since  $|\xi_i|<1$ for each $i$, the multiple geometric series in (\ref{105-am-67}) converge, hence (\ref{105-am-67}) becomes
\begin{equation*}
\sum_{\sigma \in S_N}\big[\mathbf{A}_{\sigma}\big]_{h_k,h_k}\frac{\xi_{\sigma(2)}\xi_{\sigma(3)}^{2}\cdots \xi_{\sigma_{(N)}}^{N-1}}{(1-\xi_{\sigma(k+1)}\cdots\xi_{\sigma(N)})(1-\xi_{\sigma(k+2)}\cdots\xi_{\sigma(N)})\cdots (1-\xi_{\sigma(N)})}.
\end{equation*}
\begin{lemma}\label{1141-pm-630}
Let $N \geq 2$. For each $k=0,1,\cdots, N$,
\begin{equation}\label{604-pm-67}
\begin{aligned}
&\sum_{\sigma \in S_N}\big[\mathbf{A}_{\sigma}\big]_{h_k,h_k}\frac{\xi_{\sigma(2)}\xi_{\sigma(3)}^{2}\cdots \xi_{\sigma_{(N)}}^{N-1}}{(1-\xi_{\sigma(k+1)}\cdots\xi_{\sigma(N)})(1-\xi_{\sigma(k+2)}\cdots\xi_{\sigma(N)})\cdots (1-\xi_{\sigma(N)})} \\
=&~(1-\xi_1)\cdots(1-\xi_k)\prod_{1\leq i<j\leq N}\frac{\xi_j-\xi_i}{1-\xi_i}\prod_{i=1}^N\frac{1}{1-\xi_i}.
\end{aligned}
\end{equation}
\end{lemma}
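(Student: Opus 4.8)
The plan is to substitute the explicit formula for $[\mathbf{A}_\sigma]_{h_k,h_k}$ from Lemma \ref{146-am-520} and to split off the part of the summand that does not depend on $\sigma$. Since the numerators $\prod_{j=1}^k(1-\xi_j)^{j-1}\prod_{j=k+1}^N(1-\xi_j)^{j-2}$ in (\ref{336-pm-524}) carry no $\sigma$, they factor out of the sum, so the left-hand side of (\ref{604-pm-67}) equals
\[
\prod_{j=1}^k(1-\xi_j)^{j-1}\prod_{j=k+1}^N(1-\xi_j)^{j-2}\cdot S_k,
\]
where
\[
S_k=\sum_{\sigma\in S_N}\textrm{sgn}(\sigma)\,\frac{\prod_{j=1}^N\xi_{\sigma(j)}^{\,j-1}}{\prod_{j=1}^k(1-\xi_{\sigma(j)})^{j-1}\prod_{j=k+1}^N(1-\xi_{\sigma(j)})^{j-2}\prod_{m=k+1}^N\!\big(1-\xi_{\sigma(m)}\cdots\xi_{\sigma(N)}\big)}.
\]
A bookkeeping of the exponent of each $(1-\xi_i)$ shows that the claimed right-hand side of (\ref{604-pm-67}) equals the same $\sigma$-independent prefactor times $\prod_{1\le i<j\le N}(\xi_j-\xi_i)\prod_{i=1}^N(1-\xi_i)^{1-N}$. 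Hence the lemma reduces to the single identity $S_k=\prod_{1\le i<j\le N}(\xi_j-\xi_i)\prod_{i=1}^N(1-\xi_i)^{1-N}$ for every $k$; in particular $S_k$ must be independent of $k$, which is exactly the source of the $k$-independence observed in Theorem \ref{1143-am-629}.

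Next I would settle the base case $k=N$, where all the cumulative denominators $1-\xi_{\sigma(m)}\cdots\xi_{\sigma(N)}$ are absent. Writing $g_i=\xi_i/(1-\xi_i)$, the summand factorizes as $\textrm{sgn}(\sigma)\prod_{j=1}^N g_{\sigma(j)}^{\,j-1}$, so $S_N$ is the Vandermonde determinant $\det\big[g_i^{\,j-1}\big]=\prod_{1\le i<j\le N}(g_j-g_i)$. Using $g_j-g_i=(\xi_j-\xi_i)/\big((1-\xi_i)(1-\xi_j)\big)$ and counting the $N-1$ pairs in which each index occurs, this collapses to $\prod_{i<j}(\xi_j-\xi_i)\prod_i(1-\xi_i)^{1-N}$, as required.

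It then remains to pass from $k=N$ to general $k$, i.e. to show that re-introducing the cumulative factors does not change the value of the sum, and this is the main obstacle. I would argue by downward induction on $k$. Comparing the summands at consecutive levels gives the clean ratio
\[
(\text{summand at }k)=(\text{summand at }k{+}1)\cdot\frac{1-\xi_{\sigma(k+1)}}{1-\xi_{\sigma(k+1)}\xi_{\sigma(k+2)}\cdots\xi_{\sigma(N)}},
\]
so that $S_k=S_{k+1}$ is equivalent to the vanishing of $\sum_{\sigma}(\text{summand at }k{+}1)\big[\tfrac{1-\xi_{\sigma(k+1)}}{1-\xi_{\sigma(k+1)}\cdots\xi_{\sigma(N)}}-1\big]$. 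The plan for this vanishing is to group the permutations according to the unordered set $B=\{\sigma(k+1),\dots,\sigma(N)\}$ filling the tail positions, since the top cumulative factor $1-\prod_{b\in B}\xi_b$ is then common to the whole group; after clearing this shared factor the residual alternating sum over the orderings of $B$ is expected to cancel the cumulative denominator, exactly as in the $N=2$ case where $\xi_2(1-\xi_1)^2-\xi_1(1-\xi_2)^2$ factors as $(\xi_2-\xi_1)(1-\xi_1\xi_2)$ and the factor $1-\xi_1\xi_2$ drops out. Equivalently, one may attack general $k$ directly by a subset decomposition over $B$; in that route the Vandermonde subset identity of Corollary \ref{1148-pm-630} is the natural tool for reassembling the contributions of the different sets $B$ once the tail sums have been evaluated. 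Either way, controlling the cancellation of the cumulative products $1-\xi_{\sigma(m)}\cdots\xi_{\sigma(N)}$ is the crux of the argument.
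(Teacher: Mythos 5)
Your reduction is sound: factoring the $\sigma$-independent numerator of $[\mathbf{A}_{\sigma}]_{h_k,h_k}$ out of the sum and matching exponents of $(1-\xi_i)$ on the right-hand side correctly reduces the lemma to $S_k=\prod_{1\le i<j\le N}(\xi_j-\xi_i)\prod_{i=1}^N(1-\xi_i)^{1-N}$, and your evaluation of the extreme case $k=N$ as a Vandermonde determinant in $g_i=\xi_i/(1-\xi_i)$ is correct and complete. But the passage from $k=N$ to general $k$ --- which you yourself identify as ``the crux'' --- is not proved; it is only announced. Phrases such as ``is expected to cancel the cumulative denominator'' signal the gap. Moreover, the specific plan you sketch does not work as stated: grouping permutations by the unordered tail set $B=\{\sigma(k+1),\dots,\sigma(N)\}$ makes only the \emph{top} cumulative factor $1-\prod_{b\in B}\xi_b$ common to the group, while the nested factors $1-\xi_{\sigma(m)}\cdots\xi_{\sigma(N)}$ for $m>k+1$ still depend on the ordering of $B$, so ``clearing the shared factor'' leaves an alternating sum over orderings of $B$ that is itself of the same difficulty as the original identity. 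The entire content of the lemma is the cancellation of these cumulative products, and that content is absent.

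For comparison, the paper does not attempt a telescoping in $k$. It first passes to the equivalent identity (\ref{615-pm-67}) via the substitution $\xi_i\mapsto 1/\xi_{N-i+1}$, observes that the left side is antisymmetric (hence a symmetric function times $\prod_{i<j}(\xi_j-\xi_i)$), and then decomposes each $\sigma$ according to which $k$-subset $\Lambda$ occupies the tail positions, writing $\sigma$ as a pair $(\sigma',\sigma'')$. The inner sum over $\sigma'$ --- the one containing all the cumulative products --- is evaluated by quoting identity (51) of \cite{Lee-2017} (essentially Tracy--Widom's (1.6) with $p=1$), the sum over $\sigma''$ by Lemma \ref{1144-pm-630}, and the outer sum over $\Lambda$ by the Laplace-expansion Corollary \ref{1148-pm-630}. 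In other words, the cancellation of the cumulative products is outsourced to a previously established TASEP identity rather than re-derived. To complete your argument you would need either an honest proof of the vanishing of $\sum_\sigma(\text{summand at }k{+}1)\bigl[\tfrac{1-\xi_{\sigma(k+1)}}{1-\xi_{\sigma(k+1)}\cdots\xi_{\sigma(N)}}-1\bigr]$, or to invoke the known $k=0$ identity for the sum over tail orderings as the paper does; as submitted, the proof is incomplete.
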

\begin{remark}
When $k=0~\textrm{or}~N$, the identity (\ref{604-pm-67}) is for the TASEP. If $k=0$, the identity is for the probability that $x_1 \geq  x$ at time $t$ and (\ref{604-pm-67}) is the same as (1.6) with $p=1$ in \cite{Tracy-Widom-2008}. If $k=N$, (\ref{604-pm-67}) is for the probability that $x_1=x,x_2=x+1,\cdots, x_N=x+N-1$ at time $t$.
\end{remark}
Dividing both sides of (\ref{604-pm-67}) by $\prod_{j=1}^k(1-\xi_j)^{j-1}\prod_{j=k+1}^N(1-\xi_j)^{j-2}$ and then substituting $1/\xi_{N-i+1}$ for $\xi_i $, we see that an equivalent version of (\ref{604-pm-67}) is
\begin{equation}\label{615-pm-67}
\begin{aligned}
&\sum_{\sigma \in S_N}\textrm{sgn}(\sigma)\frac{1}{\prod_{j=2}^k\big(\xi_{\sigma(N+1-j)}-1\big)^{j-1}\prod_{j=k+1}^{N}\big(\xi_{\sigma(N+1-j)}-1\big)^{j-2}}\\
&~\hspace{1cm}\times \frac{\xi_{\sigma(N-k-1)}\xi_{\sigma(N-k-2)}^{2}\cdots \xi_{\sigma{(1)}}^{N-k-1}}{\big(\xi_{\sigma(N-k)}\cdots\xi_{\sigma(1)}-1\big)\big(\xi_{\sigma(N-k-1)}\cdots\xi_{\sigma(1)}-1\big)\cdots \big(\xi_{\sigma(1)}-1\big)} \\
=&~\prod_{i=1}^N\frac{1}{(\xi_i-1)^{N-1}}\prod_{1\leq i<j\leq N}(\xi_j-\xi_i).
\end{aligned}
\end{equation}
We will prove (\ref{615-pm-67}) for each $k=0,1,\cdots, N$. The main idea of the proof is essentially the same as the idea in the proof of (47) in \cite{Lee-2017}.
\begin{proof}[Proof of the identity (\ref{615-pm-67})]
If $k=0$, then the identity (\ref{615-pm-67}) is equal to (50) in \cite{Lee-2017} or essentially equal to (1.6) with $p=1$ in \cite{Tracy-Widom-2008}. If $k=1$, then the identity (\ref{615-pm-67}) is equal to (47) in \cite{Lee-2017}. Let us show the identity for $k=2,\cdots, N$. The left hand side of (\ref{615-pm-67}) is an antisymmetric function of $\xi_1,\cdots, \xi_{N}$ because it is an anti-symmetrized  sum, so it can be written as
\begin{equation*}
G(\xi_1,\cdots, \xi_{N})\times \prod_{1\leq i<j\leq N}(\xi_j-\xi_i)
\end{equation*}
for some symmetric function $G(\xi_1,\cdots, \xi_{N})$. We will  show that
\begin{equation*}
G(\xi_1,\cdots, \xi_{N})=\prod_{i=1}^N\frac{1}{(\xi_i-1)^{N-1}}.
\end{equation*}
 Let us fix a subset $\Lambda = \{\alpha_1,\cdots, \alpha_k\} \subset \{1,\cdots, N\}$. Let $\sigma'$ be a permutation mapping from $\{1,\cdots,N-k\}$ onto $\Lambda^c=\{1,\cdots,N\}\setminus \Lambda$ and $S'_{N-k}$ be the symmetric group of $\sigma'$. Let $\sigma''$ be a permutation mapping from $\{N-k+1,\cdots,N\}$ onto $\Lambda$ and $S''_{k}$ be the symmetric group of $\sigma''$. Let $\sigma_{\Lambda}$ be a permutation in $S_N$ such that
\begin{equation}\label{832-pm-68}
\begin{aligned}
&\sigma_{\Lambda}(i) = \sigma'(i)&\textrm{for}~i=1,\cdots,N-k~~\textrm{and}\\
&\sigma_{\Lambda}(i) = \sigma''(i)&\textrm{for}~i=N-k+1,\cdots,N.
\end{aligned}
\end{equation}
Then, the left hand side of (\ref{615-pm-67}) is equal to
\begin{equation}\label{831-pm-68}
\begin{aligned}
&\sum_{\Lambda}\sum_{\sigma_{\Lambda} \in S_N}\textrm{sgn}(\sigma_{\Lambda})\frac{1}{\prod_{j=2}^k\big(\xi_{\sigma_{\Lambda}(N+1-j)}-1\big)^{j-1}\prod_{j=k+1}^{N}\big(\xi_{\sigma_{\Lambda}(N+1-j)}-1\big)^{j-2}}\\
&~\hspace{1cm}\times \frac{\xi_{\sigma_{\Lambda}(N-k-1)}\xi_{\sigma_{\Lambda}(N-k-2)}^{2}\cdots \xi_{\sigma_{\Lambda}{(1)}}^{N-k-1}}{\big(\xi_{\sigma_{\Lambda}(N-k)}\cdots\xi_{\sigma_{\Lambda}(1)}-1\big)\big(\xi_{\sigma_{\Lambda}(N-k-1)}\cdots\xi_{\sigma_{\Lambda}(1)}-1\big)\cdots \big(\xi_{\sigma_{\Lambda}(1)}-1\big)}.
\end{aligned}
\end{equation}
Using (\ref{832-pm-68}) and the notation in Theorem \ref{242-am-69}, we have a relation
\begin{equation*}
\textrm{sgn}(\sigma_{\Lambda})=\textrm{sgn}\big(\{1,\cdots,N-k\},\Lambda^c\big)\times \textrm{sgn}(\sigma')\times \textrm{sgn}(\sigma''),
\end{equation*}
hence, (\ref{831-pm-68}) is equal to
\begin{equation}\label{833-pm-68}
\begin{aligned}
&\sum_{\Lambda}\sum_{\sigma' \in S'_{N-k}}\sum_{\sigma'' \in S''_k}\textrm{sgn}\big(\{1,\cdots,N-k\},\Lambda^c\big)\times \textrm{sgn}(\sigma')\times \textrm{sgn}(\sigma'') \\
&~\hspace{1cm}\times\frac{1}{\prod_{j=2}^k\big(\xi_{\sigma''(N+1-j)}-1\big)^{j-1}\prod_{j=k+1}^{N}\big(\xi_{\sigma'(N+1-j)}-1\big)^{j-2}}\\
&~\hspace{1cm}\times \frac{\xi_{\sigma'(N-k-1)}\xi_{\sigma'(N-k-2)}^{2}\cdots \xi_{\sigma'{(1)}}^{N-k-1}}{\big(\xi_{\sigma'(N-k)}\cdots\xi_{\sigma'(1)}-1\big)\big(\xi_{\sigma'(N-k-1)}\cdots\xi_{\sigma'(1)}-1\big)\cdots \big(\xi_{\sigma'(1)}-1\big)}.
\end{aligned}
\end{equation}
Summing over all $\sigma'$ in $S'_{N-k}$,
\begin{equation*}
\begin{aligned}
&\sum_{\sigma' \in S'_{N-k}}\textrm{sgn}(\sigma')\frac{1}{\big(\xi_{\sigma'(N-k)}-1\big)^{k-1}\cdots\big(\xi_{\sigma'(1)}-1\big)^{k-1}}\\
&~\hspace{1cm}\times\frac{1}{\big(\xi_{\sigma'(N-k-1)}-1\big)\big(\xi_{\sigma'(N-k-2)}-1\big)^2\cdots\big(\xi_{\sigma'(1)}-1\big)^{N-k-1}}\\
&~\hspace{1cm}\times \frac{\xi_{\sigma'(N-k-1)}\xi_{\sigma'(N-k-2)}^{2}\cdots \xi_{\sigma'{(1)}}^{N-k-1}}{\big(\xi_{\sigma'(N-k)}\cdots\xi_{\sigma'(1)}-1\big)\big(\xi_{\sigma'(N-k-1)}\cdots\xi_{\sigma'(1)}-1\big)\cdots \big(\xi_{\sigma'(1)}-1\big)} \\
=&~\prod_{i\in \Lambda^c}\frac{1}{(\xi_i-1)^{k-1}}\prod_{i\in \Lambda^c}\frac{1}{(\xi_i-1)^{N-k}}\prod_{\substack{i<j,\\i,j\in \Lambda^c}}(\xi_j-\xi_i)
\end{aligned}
\end{equation*}
by (51) in \cite{Lee-2017} with $\sigma' \in S'_{N-k}$ instead of $\sigma\in S_N$. Summing over $\sigma'' \in S''_k$,
\begin{equation*}
\sum_{\sigma'' \in S''_k}\textrm{sgn}(\sigma'')\prod_{j=2}^k\frac{1}{\big(\xi_{\sigma''(N+1-j)}-1\big)^{j-1}} = \prod_{\substack{i<j,\\ i,j \in \Lambda}}(\xi_j-\xi_i)\prod_{i\in \Lambda}\frac{1}{(\xi_i-1)^{k-1}}
\end{equation*}
by Lemma \ref{1144-pm-630}. Hence, (\ref{833-pm-68}) becomes
\begin{equation*}
\begin{aligned}
&\prod_{i=1}^N\frac{1}{(\xi_i-1)^{N-1}}\times \sum_{\Lambda}\textrm{sgn}\big(\{1,\cdots,N-k\},\Lambda^c\big)\prod_{i\in \Lambda}(\xi_i-1)^{N-k}\prod_{\substack{i<j,\\i,j\in \Lambda^c}}(\xi_j-\xi_i)\prod_{\substack{i<j,\\ i,j \in \Lambda}}(\xi_j-\xi_i)\\
=&~\prod_{i=1}^N\frac{1}{(\xi_i-1)^{N-1}}\prod_{1\leq i<j\leq N}(\xi_j - \xi_i)
\end{aligned}
\end{equation*}
by Corollary \ref{1148-pm-630} because $\textrm{sgn}(I,J) = \textrm{sgn}(I^c,J^c)$.
\end{proof}
\begin{proof}[Proof of Theorem \ref{205-am-618}]
Theorem \ref{205-am-618} is immediately proved by Lemma \ref{1141-pm-630}.
\end{proof}
The proof of Theorem \ref{1143-am-629} is similar to the proof of Theorem 1.2 in \cite{Lee-2017}.
\begin{proof}[Proof of Theorem \ref{1143-am-629}]
 If we apply  $Y=(1,\cdots, N)$ to Theorem \ref{205-am-618}, (\ref{225-am-618}) becomes
\begin{eqnarray*}
\mathbb{P}_{(Y,\nu^{(k)})}(E_{t,k,x}) &=&~ \dashint_C\cdots\dashint_C\prod_{1\leq i<j\leq N}(\xi_j-\xi_i)\prod_{i=1}^N\frac{1}{(1-\xi_i)^{N-1}}\prod_{i=1}^N\bigg(\xi_i^{x-N-1}e^{\varepsilon(\xi_i)t}\bigg)\\
~& &\hspace{1cm}\times\bigg(\prod_{j=1}^k(1-\xi_j)^{j-1}\prod_{j=k+1}^N(1-\xi_j)^{j-2}\bigg)\xi_1^{N-1}\xi_2^{N-2}\cdots\xi_{N-1}d\xi_1\cdots d\xi_N \nonumber\\
&=&~\frac{1}{N!}\dashint_C\cdots\dashint_C\prod_{1\leq i<j\leq N}(\xi_j-\xi_i)\prod_{i=1}^N\frac{1}{(1-\xi_i)^{N-1}}\prod_{i=1}^N\bigg(\xi_i^{x-N-1}e^{\varepsilon(\xi_i)t}\bigg) \nonumber\\
~& &\hspace{1cm}\times\Bigg[\sum_{\sigma \in S_N} \textrm{sgn}(\sigma)\bigg(\prod_{j=1}^k(1-\xi_{\sigma(j)})^{j-1}\prod_{j=k+1}^N(1-\xi_{\sigma(j)})^{j-2}\bigg)\\
~& &\hspace{1cm}\times\xi_{\sigma(1)}^{N-1}\xi_{\sigma(2)}^{N-2}\cdots\xi_{\sigma(N-1)}\Bigg]d\xi_1\cdots d\xi_N.
\end{eqnarray*}
Using Lemma 3.4 in \cite{Lee-2017} as in the proof of Theorem 1.2 in \cite{Lee-2017},  we obtain
\begin{equation*}
\begin{aligned}
& \sum_{\sigma \in S_N} \textrm{sgn}(\sigma)\bigg(\prod_{j=1}^k(1-\xi_{\sigma(j)})^{j-1}\prod_{j=k+1}^N(1-\xi_{\sigma(j)})^{j-2}\bigg)\xi_{\sigma(1)}^{N-1}\xi_{\sigma(2)}^{N-2}
\cdots\xi_{\sigma(N-1)}\\
& \hspace{2cm}=  (-1)^{N(N-1)/2}\prod_{1\leq i<j\leq N}(\xi_j - \xi_i)
\end{aligned}
\end{equation*}
and  finally obtain the formula (\ref{534-pm-517}).
\end{proof}
\vspace{1cm}
\noindent \textbf{Acknowledgement}\\
The author is grateful to anonymous referees for valuable comments. This work was supported by the social policy grant of Nazarbayev University.\\
\vspace{0.5cm}


\begin{thebibliography}{99}
\bibitem{Chatterjee-Schutz-2010} Chatterjee, S. and Sch\"{u}tz, G.: Determinant representation for some transition probabilities in the TASEP with second class particles, J. Stat. Phys., {\textbf{140}}, 900--916, (2010).
\bibitem{Corwin-2014} {Corwin, I.:} {Two ways to solve ASEP}, In Topics in percolative and disordered systems, Springer, New York,
1–13, (2014).
\bibitem{Johansson-2000} {Johansson, K.:} {Shape fluctuations and random matrices,} Commun. Math. Phys., \textbf{209}, 437--476, (2000).
\bibitem{Korhonen-Lee-2014} {Korhonen, M.  and Lee, E.:} The transition probability and the probability for the left-most particle's position of the $q$-totally asymmetric zero range process, J. Math. Phys., \textbf{55}, 013301, (2014).
\bibitem{Lee-2012} {Lee, E.:} {The current distribution of the multiparticle hopping asymmetric diffusion model,} J. Stat. Phys.,  {\textbf{149}}, 50--72, (2012).
\bibitem{Lee-2017} Lee, E.: On the TASEP with second class particles, arXiv:1705.10544.
\bibitem{Murota-book} Murota, K.: Matrices and Matroids for Systems Analysis, Springer, (2000).
\bibitem{Povolotsky-2013} {Povolotsky, A. M.:}  {On the integrability of zero-range chipping models with factorized steady states}, J. Phys. A,  \textbf{46}, 465205, (2013).
\bibitem{Nagao-Sasamoto-2004} {Nagao, T. and Sasamoto, T.:} {Asymmetric simple exclusion process and modified random matrix ensembles,} Nucl. Phys. B, \textbf{699}, 487--502, (2004).
\bibitem{Schutz-1997} {Sch\"{u}tz, G. M.:} Exact solution of the master equation for the asymmetric exclusion process, J.  Stat. Phys., {\textbf{88}}, 427--445, (1997).
\bibitem{Tracy-Widom-1994} {Tracy, C. and Widom, H.:} {Level spacing distributions and the Airy kernel}, Commun. Math. Phys., \textbf{159}, 151--174, (1994).
\bibitem{Tracy-Widom-2008} Tracy, C. and Widom, H.: Integral formulas for the asymmtric simple exclusion process, Commun. Math. Phys., \textbf{279}, 815--844, (2008).
\bibitem{Tracy-Widom-2013} Tracy, C. and Widom, H.: On the asymmetric simple exclusion process with multiple species, J. Stat. Phys., {\textbf{150}}, 457--470, (2013).
\bibitem{Tracy-Widom-2017} Tracy, C. and Widom, H.: Blocks in the asymmetric simple exclusion process, arXiv:1707.04927.
\bibitem{Wang-Waugh-2016} {Wang, D. and Waugh, D.:} {The transition probability of the $q$-TAZRP ($q$-Bosons) with inhomogeneous jump rates,} SIGMA, \textbf{12}, 037, (2016).
\bibitem{Zeidler-book} Zeidler, E: Quantum Field Theory III: Gauge Theory: A Bridge between Mathematicians and Physicists, Springer, (2011).
\end{thebibliography}
\end{document}